\newtheorem{theorem}{Theorem}
\newtheorem{lem}{Lemma}
\newtheorem{cor}{Corollary}
\theoremstyle{definition}
\newtheorem{definition}{Definition}
\newtheorem{clm}{Claim}
\begin{document}
	\title{ Degree-truncated choosability of  graphs}
	\author{Huan Zhou$^1$ \and Jialu Zhu$^1$ \and Xuding Zhu$^1$\thanks{Grant numbers: NSFC 12371359. }}
\date{%
    $^1$School of Mathematical Sciences, Zhejiang Normal University\\[2ex]%
    \today
}
	\maketitle 
	\begin{abstract}
 A graph $G$ is called degree-truncated $k$-choosable if for every list assignment $L$ with 
  $|L(v)| \ge \min\{d_G(v), k\}$ for each vertex $v$, $G$ is $L$-colourable. Richter asked whether every 3-connected non-complete planar graph is degree-truncated 6-choosable. We answer this question in negative by constructing a 3-connected non-complete planar graph which is not degree-truncated 7-choosable. Then we prove that every 3-connected non-complete planar graph is degree-truncated 16-DP-colourable (and hence degree-truncated $16$-choosable).  We further prove that for an arbitrary proper minor closed family ${\mathcal G}$ of graphs, let $s$ be the minimum integer such that   $K_{s,t} \notin \mathcal{G}$ for some $t$, then there is a constant $k$ such that every $s$-connected graph $G \in  {\mathcal G}$ other than a GDP tree is degree-truncated DP-$k$-colourable (and hence degree-truncated $k$-choosable), where a GDP-tree is a graph whose blocks are complete graphs or cycles.
  In particular, for any surface $\Sigma$, there is a constant $k$ such that every 3-connected non-complete graph embeddable on $\Sigma$ is  degree-truncated DP-$k$-colourable (and hence degree-truncated $k$-choosable).   The $s$-connectedness for graphs in $\mathcal{G}$ (and 3-connectedness for   graphs embeddable on $\Sigma$) is   necessary, as for any positive integer $k$, $K_{s-1,k^{s-1}} \in \mathcal{G}$ ($K_{2,k^2}$ is planar)  is not degree-truncated $k$-choosable. Also, non-completeness is a necessary condition, as complete graphs are not degree-choosable. 
	\end{abstract}
	
	\section{Introduction}	
	
Assume $G$ is a graph. A {\em list assignment} of $G$ is a mapping $L$ that assigns to each vertex $v$ of $G$ a set $L(v)$ of permissible colours. An $L$-colouring of $G$ is a mapping $\phi$ that assigns to each vertex $v$ a colour $\phi(v) \in L(v)$ such that $\phi(u) \ne \phi(v)$ for every edge $uv$. Given a mapping $f: V(G) \to \mathbb{N}$, an {\em $f$-list assignment} of $G$ is a list assignment $L$ of $G$ with $|L(v)| \ge f(v)$ for every vertex $v$. We say $G$ is {\em $f$-choosable} if 
$G$ is $L$-colourable for every $f$-list assignment $L$ of $G$. In particular, we say $G$ is {\em $k$-choosable} if 
$G$ is $f$-choosable for the function $f$ defined as $f(v)=k$ for every vertex $v$, and  $G$ is {\em degree-choosable} if $G$ is $f$-choosable for the function $f$ defined as $f(v)=d_G(v)$ for every vertex $v$.
The {\em choice number} $ch(G)$ of $G$ is the minimum integer $k$ for which $G$ is $k$-choosable.
For $k \ge 3$, it is NP-hard to determine if a graph $G$ is $k$-choosable. On the other hand, degree-choosable graphs have a simple characterization. 
A connected graph $G$ is called a {\em Gallai-tree} if each block of $G$ is a complete graph or an odd cycle. The following result was proved in \cite{ERT, Vizing}.

\begin{theorem}
    \label{thm-degree}
    A connected graph $G$ is not degree-choosable if and only if $G$ is a Gallai-tree.
\end{theorem}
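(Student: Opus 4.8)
The plan is to prove the two implications separately. Throughout, for a list assignment $L$ I call $L$ \emph{tight} if $|L(v)| = d_G(v)$ for every vertex $v$, and I abbreviate $d(v) = d_G(v)$. I would prove the direction ``Gallai-tree $\Rightarrow$ not degree-choosable'' (exhibiting a tight non-colourable list) by induction on the block structure, using a strengthened ``forcing'' statement; and the direction ``connected non-Gallai-tree $\Rightarrow$ degree-choosable'' by a greedy reverse-BFS argument that reduces everything to colouring one explicit degree-choosable gadget.

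For the first direction I would prove the following strengthening by induction on $|V(G)|$: for every connected Gallai-tree $G$ and every vertex $r$ there is a list assignment $L$ with $|L(r)| = d(r)+1$ and $|L(v)| = d(v)$ for $v \ne r$, together with a colour $c^\star \in L(r)$, such that $G$ is $L$-colourable but every $L$-colouring $\phi$ satisfies $\phi(r) = c^\star$. Granting this, deleting $c^\star$ from $L(r)$ yields a tight list with no colouring, so $G$ is not degree-choosable. The base cases are a clique $K_n$ (set $L(v) = \{1,\dots,n-1\}$ for $v \ne r$ and $L(r) = \{1,\dots,n\}$, forcing $c^\star = n$) and an odd cycle (set $L(v) = \{1,2\}$ for $v \ne r$ and $L(r) = \{1,2,3\}$, forcing $c^\star = 3$ because an odd cycle is not $2$-colourable). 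For the inductive step I split at a cut vertex. If $r$ is itself a cut vertex, I apply the hypothesis to each of the $m$ branches $G_1,\dots,G_m$ at $r$, \emph{identify} all the forced colours into a single colour $c^\star$ and keep the remaining palettes pairwise disjoint; then $\phi(r)=a$ extends to every branch iff $a = c^\star$, while $|L(r)| = \sum_i (d_{G_i}(r)+1) - (m-1) = d(r)+1$. If $r$ is not a cut vertex, then $G$ has at least two blocks, so I peel off an end-block $B_0 \not\ni r$ (which exists, as the block tree has at least two leaf-blocks and $r$ lies in a unique block) attached at a cut vertex $x_0 \ne r$, apply the hypothesis to $(G-(B_0\setminus x_0),\,r)$ and to $(B_0,x_0)$ on disjoint palettes, and set $L(x_0) = (L_0(x_0)\setminus\{c_0\}) \cup L'(x_0)$; the forcing of $x_0$ inside $B_0$ then obliges $x_0$ to use a colour from the $G'$-side, so every $L$-colouring restricts to a $G'$-colouring and $r$ is still forced to $c^\star$.

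For the second direction the first ingredient is the \emph{surplus lemma}: if $G$ is connected, $|L(v)|\ge d(v)$ for all $v$, and $|L(w)| > d(w)$ for some $w$, then $G$ is $L$-colourable --- root a spanning tree at $w$ and colour greedily in reverse BFS order, so that each non-root vertex sees an uncoloured parent (hence a free colour) and the root $w$ has a free colour from its surplus. Hence I may assume the list is tight. Since $G$ is not a Gallai-tree, some block $B$ is $2$-connected and is neither a clique nor an odd cycle. The structural heart is to locate inside $B$ an \emph{induced} connected subgraph $H$ drawn from an explicit family of degree-choosable graphs --- most simply an induced even cycle (which is $2$-choosable, hence degree-choosable), and otherwise an induced theta-graph such as $K_4-e = \Theta_{1,2,2}$. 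Given such an $H$, I colour $V(G)\setminus V(H)$ by reverse BFS toward $H$ (each such vertex keeps a free colour exactly as in the surplus lemma) and finally colour $H$: since $H$ is induced, every $v\in V(H)$ has exactly $d(v)-d_H(v)$ already-coloured neighbours, so its residual list has size at least $d_H(v)$, and the degree-choosability of $H$ finishes the colouring.

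The main obstacle is the structural lemma in the second direction. A $2$-connected non-complete non-odd-cycle graph need not contain an induced even cycle (for instance $K_4-e$ contains none), so one must show that it always contains an induced subgraph from a fixed list of explicitly degree-choosable gadgets (even cycles, theta-graphs, and two odd cycles joined by a path), which requires an ear-decomposition and chord analysis of $B$ together with a direct verification that each gadget is degree-choosable. A secondary, more routine difficulty is the colour bookkeeping in the first direction: keeping palettes disjoint across blocks while correctly identifying or deleting the forced colours at each cut vertex, so that both colourability and the single-colour forcing survive through the induction.
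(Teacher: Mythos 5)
The paper does not prove Theorem~\ref{thm-degree} at all: it is quoted as a known result of Erd\H{o}s--Rubin--Taylor and Vizing (and Borodin), so there is no in-paper argument to compare yours against. Your proposal is the classical route to that theorem. The first direction is essentially complete and correct as you sketch it: the strengthened ``forcing'' statement (one surplus colour at a designated root $r$, colourable, but every colouring forces $r$ to a fixed colour $c^\star$) does go through by induction on the block structure. The palette bookkeeping you worry about is fine: in the cut-vertex case the count $\sum_i(d_{G_i}(r)+1)-(m-1)=d_G(r)+1$ is right, and the forcing follows because any colour $a\ne c^\star$ on $r$ lies in exactly one $L_j(r)$, so the restriction to $G_j$ is an $L_j$-colouring and the induction hypothesis forces $a=c^\star$; in the end-block case the disjointness of palettes makes the two partial colourings compatible and the forcing of $x_0$ inside $B_0$ pushes $\phi(x_0)$ into $L'(x_0)$ as you claim. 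The surplus lemma and the reverse-BFS reduction to an induced gadget in the second direction are likewise standard and correct (note that a block is an induced subgraph of $G$, so ``induced in $B$'' does give ``induced in $G$'').

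The genuine gap is exactly where you locate it, and it is not a small one: the second direction rests entirely on the structural claim that every $2$-connected graph other than a complete graph or a cycle contains an induced even cycle with at most one chord (Rubin's block lemma), together with a verification that every such gadget --- even cycles and simple theta graphs $\Theta_{a,b,c}$ --- is degree-choosable. Neither is supplied. Rubin's lemma is the substantive combinatorial core of the whole theorem (a shortest-cycle and chord analysis with several cases), and the degree-choosability of theta graphs also needs a real argument: the naive ``colour one branch vertex and greedily finish'' reduces to a tree with tight lists, which is precisely the situation where greedy colouring can fail, so one must first exploit a difference between the lists of the two degree-$3$ vertices (or between adjacent degree-$2$ vertices) to create a surplus. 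Your ``two odd cycles joined by a path'' gadget is not needed once you have the induced version of Rubin's lemma, so the cleanest completion is: prove Rubin's block lemma, prove even cycles are $2$-choosable and theta graphs are degree-choosable, and the rest of your outline closes the proof. As it stands, the proposal is a correct architecture with its hardest load-bearing lemma asserted rather than proved.
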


In this paper, we study  degree-truncated $k$-choosability of graphs, which is a combination of degree-choosability and $k$-choosability.

     \begin{definition}
         Assume $G$ is a graph, $k$ is a positive integer, and $f(v)=\min\{k, d_G(v)\}$.  An $f$-list assignment of $G$ is called a {\em degree-truncated $k$-list assignment}, and if $G$ is   $f$-choosable,  then we say that $G$ is {\em  degree-truncated $k$-choosable}. The {\em degree-truncated choice number} of $G$ is the minimum $k$ such that $G$ is degree-truncated $k$-choosable.
 \end{definition}
  
In the degree-truncated $k$-list colouring model, each vertex $v$ has a list of size $k$, however, if a vertex $v$ has degree $d_G(v) < k$, then its list size is reduced to $d_G(v)$. In the study of $k$-list colouring of graphs, we do not need to consider vertices of degree less than $k$, as they can always be coloured properly, no matter what colours are assigned to its neighbours. In the degree-truncated $k$-list colouring model, vertices of small degree are also critical to colourability.  
For example, for any positive integer $k$, $K_{2, k^2}$ is a 2-connected planar graph which is not degree-truncated $k$-choosable: Assume the two parts of $K_{2, k^2}$ are $A=\{u,v\}$ and $B= \{v_{i,j}: 1 \le i, j \le k\}$. Let $L(u)=\{a_i: 1 \le i \le k\}$, $L(v)=\{b_i: 1 \le i \le k\}$ and $L(v_{i,j}) = \{a_i, b_j\}$. Then $L$ is a degree-truncated $k$-list assignment of $K_{2, k^2}$ and $K_{2, k^2}$ is not $L$-colourable.
Also, if $G$ is a Gallai-tree, then $G$ is not degree-choosable, and hence not degree-truncated $k$-choosable for any $k$.

    The degree-truncated choice number of planar graphs   was first considered by Richter \cite{Hutchinson, SV}. 
    He asked whether every 3-connected non-complete planar graph is degree-truncated 6-choosable. Motivated by this question, Hutchinson \cite{Hutchinson} studied degree-truncated choice number of outerplanar graphs. She proved that 2-connected   maximal outerplanar graphs other than $K_3$ are degree-truncated $5$-choosable, and that 2-connected bipartite outerplanar graphs are degree-truncated $4$-choosable. 

    The degree-truncated choice number of $K_5$-minor free graphs was studied in \cite{CPTV}. Denote by $S_k$ the set of vertices of  degree less than $k$ and denote by $d(S_k)$ the smallest distance between connected components of $G[S_k]$. It was shown in \cite{CPTV} that (1) for any $k \ge 3$, there are 2-connected non-complete planar graphs $G$ with $d(S_k) =2$ and minimum degree $\delta(G) =3$ that are not degree-truncated $k$-choosable; (2) for any $k \ge 3$, there are 3-connected non-complete 
 $K_5$-minor free graphs $G$ with $d(S_k) =2$  that are not degree-truncated $k$-choosable; (3)  for $k \ge 8$, every connected $K_5$-minor free graph $G$ with $d(S_k)=3$ is degree-truncated $k$-choosable, provided that $G$ is not a Gallai-tree; (4) for $k \ge 7$, every 3-connected non-complete $K_5$-minor free graph $G$ with $d(S_k) \ge 3$ is degree-truncated $k$-choosable.  

  Richter's question remained open, and it was unknown whether there is a constant $k$ such that every 3-connected non-complete planar graph is degree-truncated $k$-choosable. 
  
In this paper, we construct a 3-connected non-complete planar graph which is not degree-truncated $7$-choosable. This answers Richter's question in negative (even if $6$ is replaced by $7$). On the other hand, we prove the following result.

\begin{theorem}
    \label{thm-planar}
    Every 3-connected non-complete planar graph is degree-truncated $16$-choosable.
\end{theorem}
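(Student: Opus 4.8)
The plan is to prove the theorem by colouring the graph in two coordinated phases, after splitting the vertices by degree. Write $f(v)=\min\{d_G(v),16\}$, let $B=\{v: d_G(v)\ge 16\}$ be the \emph{big} vertices and $S=\{v:d_G(v)\le 15\}$ the \emph{small} vertices, and fix any degree-truncated $16$-list assignment $L$, so $|L(v)|=16$ for $v\in B$ and $|L(v)|=d_G(v)$ for $v\in S$ (we may assume equality). If $B=\varnothing$ then $f\equiv d_G$, and since a $3$-connected non-complete graph is not a Gallai-tree, Theorem~\ref{thm-degree} finishes the proof at once; so assume $B\ne\varnothing$. The two observations driving everything are: (i) $G[B]$ is planar, hence $5$-degenerate, so the big vertices can be ordered with back-degree at most $5$; and (ii) the neighbours of a connected component $C$ of $G[S]$ lying outside $C$ are all big, and by $3$-connectivity there are at least three such big vertices whenever $C\ne V(G)$.

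First I would colour $B$. Processing the big vertices in a $5$-degeneracy order of $G[B]$, each big vertex $w$ has at most $5$ previously coloured neighbours at the moment it is coloured, hence at least $16-5=11$ permissible colours still available. Thus the big vertices can always be coloured, and more importantly there is a large amount of slack: at each big vertex we are free to prescribe or forbid several colours. Once $B$ is coloured, the components of $G[S]$ become pairwise non-adjacent, so they may be coloured independently. For a component $C$, every $v\in C$ loses at most $|N_G(v)\cap B|$ colours from its list, leaving at least $d_G(v)-|N_G(v)\cap B|=d_C(v)$ admissible colours; as $v$ has exactly $d_C(v)$ neighbours inside $C$, the residual instance on $C$ is a degree-list-colouring instance. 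By Theorem~\ref{thm-degree}, it is colourable whenever $C$ is \emph{not} a Gallai-tree. Hence the non-Gallai components extend for free, independent of how $B$ was coloured.

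The crux is therefore the Gallai-tree components, for which degree-choosability fails by exactly one unit: such a $C$ becomes colourable the instant some vertex $v\in C$ acquires one \emph{extra} admissible colour, i.e.\ as soon as some big neighbour of $v$ either receives a colour outside $L(v)$ or shares a colour with another big neighbour of $v$. Here the slack from phase one and the $3$-connectivity combine: since $C$ has at least three distinct big neighbours, I would use the freedom in colouring $B$ to steer at least one big neighbour of $C$ into granting $C$ its single spare colour. The difficulty is \emph{simultaneity} — a single big vertex may be adjacent to many Gallai components, and its colour choice must rescue all of them at once without violating its own list or the demands of the other components it touches. Controlling how many such demands can pile onto one big vertex is where planarity enters decisively: because $G$ is planar, $K_{3,t}$ does not embed, which bounds the overlap patterns of the big-neighbourhoods of small clusters, and a counting argument shows that the $11$ free colours per big vertex suffice to satisfy all demands simultaneously. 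This is the main obstacle, and it is precisely what fixes the constant: $16$ is the smallest list size for which the slack $16-5=11$ can be shown to absorb every configuration by this method (not claimed to be optimal).

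To organize the simultaneous rescue rigorously, I would recast the two phases as an induction on $|V(G)|+|E(G)|$ over graphs equipped with a list-demand function, and drive it by always locating a reducible configuration — a small vertex, or a small cluster attached to $B$ in a favourable way — via Euler's formula and a short discharging argument. On such a configuration one applies the reduction steps (deleting a vertex while reserving a colour, or lowering a demand, in the spirit of the \ds and \df operations), colours the smaller graph by the induction hypothesis, and extends. The technical heart, and the step I expect to be most delicate, is proving that a $3$-connected non-complete planar graph always contains a configuration reducible under this scheme, and that the associated colour bookkeeping never over-commits a big vertex beyond its $11$ units of slack.
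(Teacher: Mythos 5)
Your skeleton matches the paper's: split into big vertices $V_2$ (degree $\ge 16$) and small vertices $V_1$, use $5$-degeneracy of the planar graph $G[V_2]$ to leave $11$ units of slack at each big vertex, observe that only the Gallai-tree components of $G[V_1]$ are problematic and that each needs just one ``spare'' colour donated by a big neighbour. But the two quantitative claims you defer to ``a counting argument'' are exactly the content of the paper's proof, and as stated your version of the argument has a genuine gap.

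First, your phase structure (colour \emph{all} of $B$ before touching $S$) breaks the cost bound. For a big vertex $w$ to grant a spare colour to a small neighbour $v$ of a Gallai component, $w$ must avoid every colour of its own residual list that clashes with $L(v)$; since $v$ is still completely uncoloured and $|L(v)|=d_G(v)$ can be as large as $15$, this can consume all $11$ of $w$'s remaining colours for a \emph{single} component, let alone two. The paper avoids this by interleaving: rule (R1) greedily colours away non-cut vertices of each unsafe component that are not adjacent to uncoloured big vertices, so that when the designated protector $v\in V_2$ is finally coloured, the chosen small vertex $u$ is a non-root vertex of a leaf block of the residual GDP-tree and satisfies $d_{G-X_v}(u)\le 5$ (at most $2$ remaining neighbours inside the component plus at most $3$ in $V_2$); hence the rescue costs at most $5$ colours. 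Second, you assert that $K_{3,t}$-freeness ``bounds the overlap patterns'' so that no big vertex is overloaded, but the actual bound needed is that each big vertex is responsible for at most $2$ Gallai components (so that $2\times 5=10\le 11$), and this is not a routine count: the paper first augments $G[V_2]$ with visibility edges (using $3$-connectedness to avoid parallel edges, so that each face of $G[V_2]$ contains at most one component of $G[V_1]$) and then proves Lemma~\ref{lem-nice}, an inductive statement about a ``nice'' subgraph of the vertex--face incidence graph $\Theta(G[V_2])$ in which every vertex has degree at most $2$ and every face loses at most $2$ of its incident vertices. It also needs Lemma~\ref{lem-three} to guarantee the surviving potential protectors actually reach a low-degree vertex of the component. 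Without (a) the interleaved ordering that caps the per-component cost at $5$ and (b) the assignment of at most two components per big vertex, the inequality $16-5=11$ does not close, so the proposal as written does not yet constitute a proof.
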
 

A graph $H$ is a {\em minor} of a graph $G$ if by deleting some vertices and edges and contracting some other edges of $G$, we can obtain a graph isomorphic to $H$.
Assume ${\mathcal G}$ is a family of graphs. If $G \in \mathcal{G}$  implies $G' \in \mathcal{G}$ for every minor $G'$ of $G$, then we say that $\mathcal{G}$ is {\em minor closed}. A minor closed family $\mathcal{G}$ of graphs is {\em proper} if some graphs are not contained in $\mathcal{G}$.  We shall prove the following result, which extends Theorem \ref{thm-planar} to proper minor closed families of graphs.

\begin{theorem}
    \label{thm-minor}
    For any proper minor closed family $\mathcal{G}$ of graphs, there is a constant $k$  such that every $s$-connected   graph in $\mathcal{G}$ other than a Gallai-tree is degree-truncated $k$-choosable, where $s$ is the smallest integer such that $K_{s,t} \notin \mathcal{G}$ for some positive integer $t$. In particular, for any fixed surface $\Sigma$, there is a constant $k$ such that every 3-connected non-complete graph embedded in $\Sigma$ is degree-truncated $k$-choosable. 
\end{theorem}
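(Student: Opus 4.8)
The plan is to use Theorem~\ref{thm-degree} to reduce the entire statement to a purely structural ordering problem on $G$, and then to solve that ordering problem using the sparsity forced by excluding $K_{s,t_0}$ as a minor. Fix $t_0$ with $K_{s,t_0}\notin\mathcal G$; then every $G\in\mathcal G$ is $K_{s,t_0}$-minor-free, hence has $O(n)$ edges and so is $d$-degenerate for a constant $d=d(\mathcal G)$. I would take the target list size $k$ to be a large constant depending only on $s,t_0,d$. Given an $s$-connected $G\in\mathcal G$ that is not a Gallai-tree and a degree-truncated $k$-list assignment $L$, write $H=\{v:d_G(v)\ge k\}$ and $S=\{v:d_G(v)<k\}$, so $|L(v)|\ge k$ on $H$ and $|L(v)|\ge d_G(v)$ on $S$. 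If $S=V(G)$, then $G$ has bounded degree and, not being a Gallai-tree, is even degree-choosable by Theorem~\ref{thm-degree}; hence assume $H\neq\emptyset$, so that every component of $G[S]$ has all its outside-neighbours in $H$ and, by $s$-connectivity, at least $s$ of them.

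First I would record the sharp form of Theorem~\ref{thm-degree}: a connected graph $C$ with a list assignment $g$ satisfying $|g(v)|\ge d_C(v)$ for all $v$ fails to be $g$-colourable only when $C$ is a Gallai-tree and $|g(v)|=d_C(v)$ for every $v$; in particular a single unit of slack at one vertex, or $C$ not being a Gallai-tree, already guarantees colourability. Call a component of $G[S]$ \emph{bad} if it is a Gallai-tree and \emph{good} otherwise. The reduction is then: it suffices to find a linear order of $V(G)$ such that (H) every vertex of $H$ has fewer than $k$ neighbours before it, and (L) each bad component is coloured as a block having at least one outside-neighbour placed after the block. Indeed, colouring greedily in this order, each high vertex has a free colour by (H); when a component of $G[S]$ is reached, its already-coloured outside-neighbours trim each list to at least the in-component degree, so a good component is colourable because it is not a Gallai-tree, and a bad component is colourable because its one later outside-neighbour supplies the needed unit of slack.

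To build such an order I would colour $H\setminus R$ first in a degeneracy order of $G[H]$, where $R\subseteq H$ is a reserved set dominating all bad components; then interleave the vertices of $R$ with the bad components so that each $r\in R$ is coloured after the bad components it serves; and finally colour the good components last. Condition (H) is automatic for $H\setminus R$ since $k>d$. The whole difficulty is that a reserved helper $r\in R$ is a high-degree vertex that may be adjacent to enormously many low-degree vertices, so if it is merely postponed it accumulates at least $k$ coloured neighbours and cannot be coloured. Here the excluded minor enters: each bad component is connected with at least $s$ outside-neighbours, so if too many bad components shared a common $s$-subset of $H$, contracting each bad component to a single branch set would exhibit a $K_{s,t_0}$-minor; a pigeonhole over the $s$-subsets of any small neighbourhood then bounds the number of bad components attached to any small vertex set of $H$. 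This sparsity is what should let one choose $R$ and schedule it against the bad components acyclically, absorbing all incidences into the slack provided by taking $k$ large.

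The main obstacle is exactly this scheduling step: decoupling the two competing demands on the reserved high-degree vertices, which must be coloured late enough to serve as slack for their bad components yet early enough not to be swamped by their own low-degree neighbours. I expect the technical heart of the proof to be a quantitative, Hall-type version of the pigeonhole/minor argument above for bad components and their $H$-neighbourhoods, combined with a careful acyclic ordering that converts the resulting bound into the inequality ``back-degree $<k$'' once $k$ is fixed in terms of $s,t_0,d$. Finally, for a fixed surface $\Sigma$ the family of $\Sigma$-embeddable graphs is proper minor-closed with $s=3$ (every $K_{2,t}$ embeds, but $K_{3,t}$ does not for large $t$), and a $3$-connected Gallai-tree is a complete graph, so the general statement specialises to the claimed bound for $3$-connected non-complete graphs on $\Sigma$.
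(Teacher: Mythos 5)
There is a genuine gap, and you have in fact located it yourself: the entire argument hinges on the ``scheduling step'' for the reserved helpers $R$, which you describe as the expected technical heart but do not carry out. As you note, each $r\in R$ must be coloured \emph{after} the bad components it serves, so by that time all of $r$'s neighbours inside those components are already coloured; since $r$ may have degree far exceeding $k$ into $S$, nothing in your pigeonhole sketch prevents $r$ from accumulating $k$ or more coloured neighbours, and condition (H) fails. The pigeonhole bound you propose (at most $t_0-1$ bad components per $s$-subset of $H$) does not control the number of bad components, or of low-degree neighbours, incident to a \emph{single} high vertex, so it does not by itself yield an acyclic schedule with bounded back-degree. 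Within your framework of ``postpone the helper until after the component,'' this tension is essentially unavoidable.

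The paper resolves it with a different mechanism that you may want to compare against. First, the helper is \emph{not} postponed: when a high vertex $u$ is coloured, it creates the needed slack for a still-uncoloured component $Q$ by choosing its colour outside the residual list of a suitable vertex $v\in Q$; since $v$ is taken to be a non-root vertex of a leaf block of the (partially deleted) GDP-tree $Q$, its residual list has size at most $s+t-1$, so each component costs only $s+t-1$ forbidden colours. Second, the assignment of components to helpers comes not from a pigeonhole over $s$-subsets but from iterating Thomason's edge bound on the bipartite graph obtained by contracting each component of $G[V_1]$: at each step some $u\in V_2$ sees at most $4^{s+1}s!\,st$ remaining contracted components, and those are assigned to $u$, making $u$ the \emph{last} high vertex adjacent to them. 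Third, swamping of high vertices is prevented structurally: the colouring order never colours a low vertex while it still has an uncoloured high neighbour, and (in the DP setting, which is where the paper actually works) adjacent high vertices are first given disjoint, non-conflicting sublists of size $q$ using degeneracy, so colouring $V_2$ never shrinks the lists of other $V_2$ vertices. Your reduction via the sharp form of Theorem~\ref{thm-degree} and your treatment of good components are fine, as is the specialisation to surfaces; but without an analogue of these three ingredients the order satisfying (H) and (L) simultaneously has not been constructed, and the proof is incomplete.
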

 
The connectivity requirements in Theorem \ref{thm-planar} and Theorem \ref{thm-minor}  are necessary: for every integer $k$, $K_{s-1, k^{s-1}}$ is an $(s-1)$-connected graph $G \in \mathcal{G}$ ($K_{2, k^2}$ is a 2-connected planar graph) which is not degree-truncated $k$-choosable. 
 Indeed, if   $G = K_{s-1, k^{s-1}}$ is the complete bipartite graph with partite sets $A=\{v_1, v_2,\ldots, v_{s-1}\}$ and $B=\{u_x: x=(x_1,x_2,\ldots, x_{s-1}) \in [k]^{s-1}\}$, then let $L$ be the list assignment of $G$ with $L(v_i) = [k] \times \{i\}$, and let $L(u_x) = \{(x_i,i): i=1,2,\ldots, s-1\}$. Then $L$ is a degree-truncated $k$-list assignment of $G$, and $G$ is not $L$-colourable.  
As mentioned above, the condition of not being a Gallai tree is also necessary. In case $s \ge 3$, then $s$-connected Gallai-trees are complete graphs. So we replace the condition of not being a Gallai tree by not being a complete graph. 

Instead of directly proving Theorems \ref{thm-planar} and \ref{thm-minor}, we shall prove the corresponding results for DP-colourability.
DP-colouring of graphs is a variation of list colouring of graphs introduced by Dvo\v{r}\'{a}k and Postle \cite{DP}. 

\begin{definition}
	\label{def-cover}
	A {\em  cover} of a graph $G $ is a pair $(L,M)$, where $L = \{L(v): v \in V(G)\}$ is a family of pairwise disjoint sets ($L(v)$ can be viewed as a set of permissible colours for $v$, but distinct vertices have disjoint colour sets), and $M=\{M_{e}: e \in E(G)\}$, where for each edge $e=uv$, $M_e$ is a   matching between $L(u)$ and $L(v)$. For  $f \in \mathbb{N}^G$, we say that $(L,M)$ is an $f$-cover of $G$ if $|L(v)|\ge f(v)$ for each vertex $v \in V(G)$.
\end{definition}

For a cover $(L,M)$ of a graph $G$, we treat $(L,M)$ itself as a graph, with vertex set $\cup_{v \in V(G)}L(v)$ and edge set
$\cup_{e \in E(G)}E(M_e)$. We shall use ordinary graph operations on $(L,M)$, such as deleting a vertex, etc. In particular, for a colour $c \in L(v)$,    $N_{(L,M)}(c)$ denotes the set of those colours $c'$ such that $cc' \in M_e$ for some edge $e=vu$ of $G$, and for a subset $C $ of $ \cup_{v \in V(G)}L(v)$, $N_{(L,M)}(C) = \bigcup_{c \in C}N_{(L,M)}(c)$.

\begin{definition}
	\label{def-colouring}
	Given a cover $(L,M)$ of a graph $G$, an $(L,M)$-colouring of $G$ is a mapping $\phi: V(G) \to \bigcup_{v \in V(G)}L(v)$ such that for each vertex $v \in V(G)$, $\phi(v) \in L(v)$, and for each edge $e=uv \in E(G)$, $\phi(u)\phi(v) \notin  M_e$. We say $G$ is {\em $(L, M)$-colourable} if it has an $(L,M)$-colouring.
\end{definition}

 For an $(L,M)$-colouring $\phi$ of $G$, we say that vertex $v$ of $G$ is coloured by colour $\phi(v)$.

\begin{definition}
	\label{DP-colouring}
	Assume $G$ is a graph and $f \in  \mathbb{N}^G$.  We say $G$ is {\em DP-$f$-colourable} if for every   $f$-cover $(L,M)$, $G$ has an $(L,M)$-colouring. The {\em  DP-chromatic number } of $G$ is defined as 
 $$\chi_{DP}(G)=\min\{k: G \text{ is DP-$k$-colourable} \}.$$
 For a positive integer $k$, we say $G$ is degree-truncated DP-$k$-colourable if $G$ is DP-$f$-colourable, where $f(v)= \min\{k, d_G(v)\}$ for each vertex $v$.
\end{definition}

Given an $f$-list assignment $L$ of a graph $G$, let $(L',M)$ be the  $f$-cover of $G$ induced by $L$, where    
$L'=\{L'(v): v \in V(G)\}$ is defined as $L'(v) = \{i_v: i \in L(v)\}$
 for each vertex $v$ of $G$, and $M=\{M_{uv}: uv \in E(G)\}$ is defined as $$M_{uv}=\{\{i_u,i_v\}: i \in L(u) \cap L(v)\}$$
for each edge $\{u,v\}$ of $G$.
It is obvious that $G$ is $L$-colourable if and only if $G$ is $(L',M)$-colourable. Therefore, if $G$ is DP-$f$-colourable, then it is $f$-choosable, and hence $ch(G) \le \chi_{DP}(G)$.
On the other hand, it is known \cite{Bernshteyn2016} that the difference $\chi_{DP}(G)-ch(G)$ can be arbitrarily large. 

DP-colouring was introduced in \cite{DP} as a tool for studying list colouring of graphs. DP-colouring transforms the information from lists of vertices of a graph $G$ to matchings of edges of $G$. The names of colours in the lists become irrelevant, and we can perform the operation of identifying non-adjacent vertices in inductive proofs. In the study of degree-truncated choice number of outerplanar graphs, this feature turns out to be very useful. The result of Hutchinson was generalized in \cite{LWZZ}, where it was proved that every 2-connected $K_{2,4}$-minor free graph other than a cycle is degree-truncated DP-$5$-colourable.
Now the concept of DP-chromatic number of graphs is a graph invariant of independent interests, and has been studied a lot in the literature.

In this paper, we consider the degree-truncated DP-colourability of planar graphs and proper minor closed families of graphs. We shall prove the following results.

\begin{theorem}
    \label{thm-planarDP}
    Every 3-connected non-complete planar graph is degree-truncated DP-$16$-colourable.
\end{theorem}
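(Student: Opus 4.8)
The plan is to prove Theorem \ref{thm-planarDP} directly for an arbitrary degree-truncated $16$-cover $(L,M)$ of a $3$-connected non-complete planar graph $G$, where $|L(v)| = \min\{16, d_G(v)\}$. The organizing idea is to split $V(G)$ by degree: let $T = \{v : d_G(v) \ge 16\}$ (the \emph{high} vertices, each with $|L(v)| = 16$) and $S = V(G) \setminus T$ (the \emph{low} vertices, each with $|L(v)| = d_G(v)$). Since $G$ is planar it is $5$-degenerate, and so is $G[T]$. First I would colour $G[T]$: processing $T$ in a $5$-degeneracy order, when a vertex $w \in T$ is reached at most $5$ of its colours are forbidden by already-coloured neighbours through the matchings, and since $|L(w)| = 16$ at least $11$ colours remain admissible. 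Thus $G[T]$ is always $(L,M)$-colourable, and in fact each high vertex retains a large pool of free colours — a surplus that the remaining steps will spend.

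After $T$ is coloured, restrict to $H = G[S]$ with the induced cover $(L'',M)$, where $L''(v)$ is obtained from $L(v)$ by deleting, for each already-coloured neighbour $w \in T$ of $v$, the colour matched to $\phi(w)$ by $M_{vw}$ (if any). Each of the $d_G(v)-d_H(v)$ high neighbours of $v$ deletes at most one colour, so $|L''(v)| \ge d_G(v) - (d_G(v)-d_H(v)) = d_H(v)$ for every $v \in S$; hence on $H$ the induced cover satisfies the degree condition exactly. By the DP-analogue of Theorem \ref{thm-degree}, every component of $H$ that is not a GDP-tree is $(L'',M)$-colourable, and distinct components may be coloured independently. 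The whole difficulty is therefore concentrated in the GDP-tree components of $H$ (single vertices, complete blocks, and cycles), for which a cover meeting the degree condition exactly may be precisely the unique non-colourable cover.

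Two structural facts resolve these components. First, by $3$-connectivity, every component $C$ of $H$ with $V(G)\setminus C \ne \emptyset$ has $N_G(C)\setminus C \subseteq T$ of size at least $3$, so $C$ has at least three high neighbours. (If instead $T = \emptyset$, then $H = G$ is $3$-connected and non-complete, hence not a GDP-tree, and we are done.) Second, the non-colourable cover of a GDP-tree is rigid, so while colouring $T$ it suffices to steer the deletions at the vertices of each GDP-tree component $C$ off this single configuration: using the surplus of free colours on $T$, I would, for each such $C$, choose the colour of one of its (at least three) high neighbours so as either to waste a deletion at some $v \in C$, yielding $|L''(v)| > d_H(v)$, or otherwise to perturb the induced matchings on $C$ away from the bad cover. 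This is the role of the \ds\ operation, with \df\ bookkeeping the remaining free colours.

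The main obstacle is to perform all these steerings \emph{simultaneously}: a single high vertex may be adjacent to many GDP-tree components, and the colour assigned to it must respect its $5$-degeneracy constraints while discharging its saving duties toward every component that relies on it. I would resolve this by a Hall-type (equivalently flow/orientation) argument: each GDP-tree component has at least three candidate high neighbours through which it can be saved, and since contracting the components yields a planar minor of $G$, the bipartite incidence structure between the high vertices and the GDP-tree components is sparse enough to admit an assignment of a savior to each component with bounded congestion at every high vertex. The constant $16$ is exactly what is needed to pay simultaneously for the $5$-degeneracy colouring of $T$ and for this bounded saving load; checking that the two budgets add up, and that the Hall condition holds under the planarity constraint, is the crux of the argument. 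Since $(L,M)$ was an arbitrary degree-truncated $16$-cover, this shows $G$ is degree-truncated DP-$16$-colourable, which in turn yields degree-truncated $16$-choosability through the induced-cover correspondence.
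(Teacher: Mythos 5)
Your high-level strategy --- split at degree $16$, exploit $5$-degeneracy to keep a surplus of at least $11$ colours at each high vertex, reduce via Lemma \ref{lem-dpdegree} to the GDP-tree components of the low part, and spend the surplus to ``save'' each such component through a designated high neighbour with bounded congestion --- is the same as the paper's. But the step you yourself flag as the crux is exactly where there is a genuine gap, and as written it fails. To save a component $C$ through a high vertex $w$ adjacent to $u \in C$, the number of colours forbidden at $w$ is, in the worst case, the size of $u$'s residual list at the moment $w$ is coloured; with your two-phase scheme (colour all of $T$ first, then $S$) this can be as large as $d_G(u) \le 15$. Nothing in your plan forces $w$ to be adjacent to a vertex of $C$ of small degree (a cut vertex of a GDP-tree can have degree up to $15$ inside $C$), and nothing forces $w$ to come after the other high neighbours of $u$, which is what makes the earlier deletions at $u$ already ``paid for''. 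Since the surplus is only $11$ and $w$ may have to serve two components, a per-save cost of about $5$ is mandatory, and your proposal contains no mechanism to achieve it. The paper gets the bound $5$ by two devices you do not have: (i) it interleaves the two phases --- rule (R1) pre-colours every vertex of a low component that is no longer adjacent to an uncoloured high vertex, which shrinks the component so that its surviving leaf blocks are cycles or single edges, hence contain a non-root vertex $u$ with $d_{Q-X_v}(u)\le 2$ and at most $3$ uncoloured high neighbours; and (ii) Lemma \ref{lem-three}, which uses $3$-connectivity and the embedding to show that among the allowed protectors of a component there is one adjacent to such a cheap vertex, and chooses the last such protector in the ordering.

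The congestion step is also thinner than you suggest, though less fatally so. A defect-Hall count on the planar bipartite graph obtained by contracting the low components does yield an assignment with at most $2$ components per high vertex when every component has at least $3$ high neighbours (and note your ``at least three high neighbours'' claim needs $V(G)\setminus (C\cup N_G(C))\ne\emptyset$ rather than $V(G)\setminus C\ne\emptyset$: in a large wheel the rim is a single low component with exactly one high neighbour). But a Hall matching hands each component one uncontrolled candidate, whereas the argument needs the protector to be adjacent to a cheap vertex of the component and to be late in the order. This is why the paper instead proves the stronger structural Lemma \ref{lem-nice}: the vertex--face incidence graph of the plane graph $G[V_2]$ has a subgraph in which every vertex serves at most $2$ faces while every face retains all but at most $2$ of its incident vertices as potential protectors; having all but two available is what makes it possible to then find a cheap, late one. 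So: right skeleton, but the two load-bearing lemmas are missing, and the all-of-$T$-first ordering you chose makes the cost bound unrecoverable without reintroducing the paper's interleaving.
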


\begin{theorem}
    \label{thm-minorDP}
    For any proper minor closed family $\mathcal{G}$ of graphs, there is a constant $k$  such that every $s$-connected   graph in $\mathcal{G}$ other than a GDP-tree is degree-truncated DP-$k$-colourable, where $s$ is the smallest integer such that $K_{s,t} \notin \mathcal{G}$ for some positive integer $t$. In particular, for any fixed surface $\Sigma$, there is a constant $k$ such that every 3-connected non-complete graph embedded in $\Sigma$ is degree-truncated DP-$k$-colourable. 
\end{theorem}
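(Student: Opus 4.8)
The plan is to prove the stronger DP-statement, Theorem~\ref{thm-minorDP}, directly and to read off the surface statement as a special case. Two properties of $\mathcal{G}$ drive everything. First, since $\mathcal{G}$ is proper and minor closed, it excludes some clique minor, so by the Kostochka--Thomason density bound there is a constant $D=D(\mathcal{G})$ for which every $G\in\mathcal{G}$ is $D$-degenerate. Second, because $K_{s,t_0}\notin\mathcal{G}$ for some $t_0$ and $\mathcal{G}$ is minor closed, no $G\in\mathcal{G}$ contains $K_{s,t_0}$ as a subgraph; equivalently, any $s$ vertices of $G$ have fewer than $t_0$ common neighbours. I would fix $k$ to be a large constant depending only on $D$, $s$ and $t_0$, and invoke throughout the DP-analogue of Theorem~\ref{thm-degree}: a connected graph is degree-DP-colourable unless it is a GDP-tree.

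The core idea is a clean two-phase colouring. Given a degree-truncated $k$-cover $(L,M)$, split $V(G)$ into the high-degree set $A=\{v:d_G(v)\ge k\}$ and the low-degree set $S_k=\{v:d_G(v)<k\}$. The induced subgraph $G[A]$ is $D$-degenerate, so colouring $A$ greedily along a degeneracy ordering always succeeds: when $u\in A$ is reached, at most $D<k=|L(u)|$ of its colours are blocked, leaving at least $k-D$ admissible choices. The crucial point is that for $v\in S_k$ we have $|L(v)|=d_G(v)=\deg_{G[S_k]}(v)+|N(v)\cap A|$, while the already-coloured neighbours in $A$ block at most $|N(v)\cap A|$ colours of $L(v)$. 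Hence after phase one each $v\in S_k$ retains a residual list of size at least $\deg_{G[S_k]}(v)$, so the residual instance on $G[S_k]$ is \emph{exactly} a degree-DP-colouring instance. By the DP-degree theorem, every component of $G[S_k]$ can then be coloured, except possibly a component that is a GDP-tree on which every residual list has size exactly its degree.

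The whole difficulty is therefore concentrated in the GDP-tree components $T$ of $G[S_k]$; note that $G$ itself is not a GDP-tree, so each such $T$ is a proper subgraph, and these arise only when $A\neq\emptyset$, so $N(T)\subseteq A$ is nonempty. To colour such a $T$ it suffices to inject one unit of slack, i.e. to make some $v\in T$ have residual list strictly larger than $\deg_{G[S_k]}(v)$; the DP-degree theorem then colours all of $T$. Slack at $v$ is created precisely by a \emph{collision}: two neighbours $u_1,u_2\in A$ of $v$ coloured so that the matchings $M_{u_1v},M_{u_2v}$ carry the chosen colours to the same colour of $L(v)$. Here the hypotheses are used. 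Since $G$ is $s$-connected, the boundary $N(T)$ is a separator, so $|N(T)|\ge s$ (the degenerate case $V(G)=T\cup N(T)$ being handled directly), while $\delta(G)\ge s$ and the GDP-tree structure of $T$ constrain how $T$ attaches to $A$; and $K_{s,t_0}$-freeness bounds how many vertices of $T$ can share a common set of high-degree neighbours. I expect the \emph{main obstacle} to be exactly twofold: first, locating in each GDP-tree component a vertex with at least two neighbours in $A$ (if no vertex of $T$ had two $A$-neighbours, $T$ would attach to $A$ so sparsely that, combined with $\delta(G)\ge s$ and $s$-connectivity, one must argue $T$ cannot occur), and second, the coordination problem that phase one must fix a \emph{single} proper colouring of $A$ forcing a collision inside \emph{every} GDP-tree component simultaneously, even though distinct components may share high-degree neighbours. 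The plan is to use the $K_{s,t_0}$-bound to show each high-degree vertex is responsible for boundedly many such constraints, and then to argue that the $k-D$ units of freedom at each vertex of $A$ suffice to satisfy them all, e.g. by an iterative or defective-choice argument; verifying this is the crux.

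Finally I would record the easy cases and deduce the corollary. If $A=\emptyset$ then $G=G[S_k]$ is a single connected degree-DP-instance, colourable because $G$ is not a GDP-tree; if $S_k=\emptyset$ the greedy phase already finishes. For a fixed surface $\Sigma$, the graphs embeddable in $\Sigma$ form a proper minor closed family with $s=3$, since $K_{2,t}$ is planar and hence embeds in $\Sigma$, whereas $K_{3,t}$ has unbounded genus. Applying the theorem with $s=3$ yields a constant $k$ such that every $3$-connected graph embeddable in $\Sigma$ that is not a GDP-tree is degree-truncated DP-$k$-colourable. Since a $3$-connected GDP-tree is a single block that is complete or a cycle, and the only $3$-connected such block is a complete graph, ``not a GDP-tree'' coincides with ``non-complete'' in this setting, which gives the stated surface corollary.
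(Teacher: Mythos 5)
Your overall frame --- degeneracy of $\mathcal{G}$, the DP-analogue of the degree-choosability theorem, and injecting one unit of slack into each GDP-tree component of the low-degree part --- is exactly the paper's frame, and your reduction of the surface statement to the minor-closed statement is also the paper's. But the step you yourself flag as ``the crux'' is where essentially all of the work lies, and the two mechanisms you propose for it do not go through. First, the collision mechanism requires a vertex of the GDP-tree component $T$ with at least two neighbours in $A$; such a vertex need not exist (e.g.\ $T$ a cycle in which every vertex has exactly one $A$-neighbour, all distinct, is consistent with $s$-connectivity for $s=3$), and even when it exists, forcing a collision means hitting one of at most $d_G(v)$ \emph{specific pairs} in $M_{u_1v}\circ M_{u_2v}^{-1}$, a far more rigid constraint than ``avoid a few colours'' and very hard to impose simultaneously for many components sharing high-degree vertices. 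Second, the single-neighbour alternative (colour $u\in A$ by a colour of $L(u)$ unmatched into the residual list of $v$) costs up to $|L(v)|=d_G(v)$, which can be as large as $k-1$ when, as in your two-phase plan, all of $A$ is coloured before any of $S_k$; the $k-D$ units of greedy freedom cannot absorb that.

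The paper resolves both points by abandoning the strict two-phase order. It interleaves: before the designated high-degree protector $u_i$ of a component $Q$ is coloured, it greedily colours all vertices of $Q$ that are non-cut and not adjacent to uncoloured high-degree vertices, which shrinks $Q$ to a GDP-tree whose leaf blocks are cycles or cliques of order at most $s+t-1$ (by $K_{s,t}$-minor-freeness), so the vertex $v$ used for protection has residual degree at most $s+t-1$ and the ``avoid'' cost drops to $s+t-1$ per component. The bounded-responsibility claim is also not a consequence of $K_{s,t}$-subgraph-freeness as you suggest: the paper contracts each component of $G[V_1]$ to a single vertex, uses $s$-connectivity to ensure each contracted vertex has degree at least $s$ in the resulting bipartite minor, and applies Thomason's bipartite-minor density bound to extract an ordering $u_1,\dots,u_p$ of $V_2$ in which each $u_i$ is the last high-degree neighbour of at most $4^{s+1}s!\,st$ components; that is the counting that makes the simultaneous protection feasible. (A further technical device you would also need: the paper first shrinks each list $L(u)$, $u\in V_2$, to a subset of size $q$ so that the matchings on edges inside $G[V_2]$ become empty, decoupling the greedy phase from the protection phase.) As written, your proposal is an accurate description of the difficulty but not a proof of its resolution.
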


It is obvious that Theorem \ref{thm-planar} and Theorem \ref{thm-minor} are corollaries of
Theorem \ref{thm-planarDP} and Theorem \ref{thm-minorDP}, respectively.

\section{A 3-connected planar graph which is not degree-truncated 7-choosable} 

Richter asked whether every 3-connected non-complete planar graph is degree-truncated 6-choosable. 
Note that if a graph $G$ is degree-truncated $k$-choosable, 
then for any integer $k' >k$, $G$ is  degree-truncated $k'$-choosable. 
This section presents a 3-connected non-complete planar graph that is not degree-truncated 7-choosable. So the answer to Richter's question is negative even if 6 is replaced by 7.

 \begin{theorem}
     \label{thm-example}
     There is a  3-connected non-complete planar graph which is not degree-truncated $7$-choosable.
 \end{theorem}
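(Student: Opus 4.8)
The plan is to construct an explicit $3$-connected non-complete planar graph $G$ together with an explicit degree-truncated $7$-list assignment $L$ under which $G$ is not $L$-colourable. The guiding intuition comes from the $K_{2,k^2}$ example already in the paper: there, two ``central'' vertices $u,v$ of large degree have their colour choices cross-constrained through a mass of degree-$2$ vertices $v_{i,j}$ whose small lists $\{a_i,b_j\}$ forbid every one of the $k^2$ combinations $(a_i,b_j)$. That gadget is only $2$-connected and its middle vertices have degree $2$, so the idea is to engineer a planar gadget that enforces the same ``forbid a prescribed pair of colours on two terminals'' behaviour while being embeddable into a $3$-connected frame and while keeping all degrees at least $3$ so that the truncation to list-size $7$ bites on enough vertices.

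The construction I would carry out proceeds in two layers. First I would design a planar \emph{blocking gadget} with two terminals $x,y$: a small planar graph whose interior vertices all have degree at least $3$, equipped with a fixed partial list assignment (lists of size at most the truncated value, i.e. $\le 7$, on the low-degree interior and boundary vertices), with the property that for one designated ``bad'' pair of colours $(\alpha,\beta)$ on $(x,y)$ the gadget admits no proper completion, whereas for every other pairing it does complete. The natural candidate is a short planar ``fan'' or a wheel-like piece in which the interior forces a domino/propagation of constraints; I would verify its blocking property by a finite case check. Second, I would take two high-degree hubs $u,v$ with lists $L(u)=\{a_1,\dots,a_7\}$ and $L(v)=\{b_1,\dots,b_7\}$ (so $|L(u)|=|L(v)|=7$ via truncation, forcing them to be coloured from these $7$-element palettes) and attach $49$ disjoint copies of the blocking gadget, the $(i,j)$-th copy joining $u$ and $v$ and tuned so that its forbidden pair is exactly $(a_i,b_j)$. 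If every one of the $49$ colour combinations for $(u,v)$ is killed by some gadget, then $G$ is not $L$-colourable, exactly mirroring the bipartite construction but now planar, $3$-connected, and with no degree-$2$ vertices.

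The remaining tasks are: (i) choose the gadget so that all $49$ copies can be embedded simultaneously in the plane with $u,v$ shared — this is where planarity must be respected, and arranging $49$ internally disjoint $u$--$v$ pieces around the two hubs in nested/parallel fashion is the natural layout, since parallel $u$--$v$ paths can be drawn as concentric planar regions; (ii) ensure $3$-connectivity of the whole graph, which typically requires adding a small number of ``frame'' edges or an outer cycle so that no $2$-cut separates the hubs or isolates a gadget, while being careful that these extra edges do not accidentally raise gadget degrees in a way that destroys the blocking property or do not create a completion path around a blocked gadget; and (iii) confirm that $G$ is non-complete and that every list has the correct truncated size $\min\{d_G(v),7\}$.

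The main obstacle I expect is (ii) combined with the gadget design: reconciling $3$-connectivity (which wants the graph to be ``richly'' connected, pushing degrees and the colouring freedom upward) with the requirement that the forbidden pair on each gadget remain genuinely unavoidable. Every frame edge added for connectivity is a potential escape route for the colouring, so the blocking gadget must be robust enough that its forbidden pair survives in the presence of the surrounding structure, and the lists on the interior vertices must be chosen so that truncation to size $7$ still leaves them over-constrained. I would therefore iterate: fix the abstract blocking gadget and its lists first, prove its stand-alone blocking property by a short finite verification, and only then commit to the global frame, checking $3$-connectivity via Menger-style arguments (three internally disjoint paths between every pair of vertices) and re-verifying that the frame edges cannot be used to complete a colouring around any blocked gadget.
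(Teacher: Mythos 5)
Your high-level architecture is exactly the paper's: two shared high-degree terminals whose truncated lists have size $7$, plus one planar ``blocking gadget'' per colour pair attached between them, so that every choice of colours on the terminals is killed by some gadget (the paper uses $42$ gadgets rather than $49$ because it makes the two terminals adjacent and gives them a common $7$-colour palette, but that difference is cosmetic). The global assembly you describe --- nesting the internally disjoint terminal-to-terminal gadgets in the plane, identifying the terminal copies, and checking $3$-connectivity through the shared terminals --- is also essentially what the paper does, and that part of the argument is routine.

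The genuine gap is that the blocking gadget itself is never constructed, and that gadget is the entire mathematical content of the theorem. You defer it to ``a short planar fan or wheel-like piece'' to be ``verified by a finite case check,'' but the constraints it must satisfy simultaneously are severe and in tension with one another: every internal vertex $v$ must receive a list of size exactly $\min\{d_G(v),7\}$ \emph{in the final assembled graph} (so a vertex given a list of size $3$ must have degree exactly $3$ even after all frame edges are added, and a vertex given a list of size $7$ must have degree at least $7$); the gadget must be planar and sit inside a $3$-connected frame; and it must admit no completion when the terminals carry the designated pair. The paper's gadget $H$ is not a single wheel but a three-stage propagation device: a $K_4$-plus-pendant layer forces $u_1$ or $v_1$ into $\{1,2\}$, a second layer of triangles then forces $u_2$ to the colour $5$, and a third layer exhausts the list of $u_3$ --- and the symmetric $t$-side is needed to handle the branch where $v_1$ rather than $u_1$ lands in $\{1,2\}$. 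Nothing in your proposal exhibits such a mechanism or explains why one must exist, and your own closing paragraph concedes that the interaction between the frame edges (needed for $3$-connectivity) and the blocking property is an unresolved obstacle. As written, the proposal is a correct plan of attack with the hard step missing, not a proof.
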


 \begin{figure}
     \centering
     \includegraphics[width=0.7\linewidth]{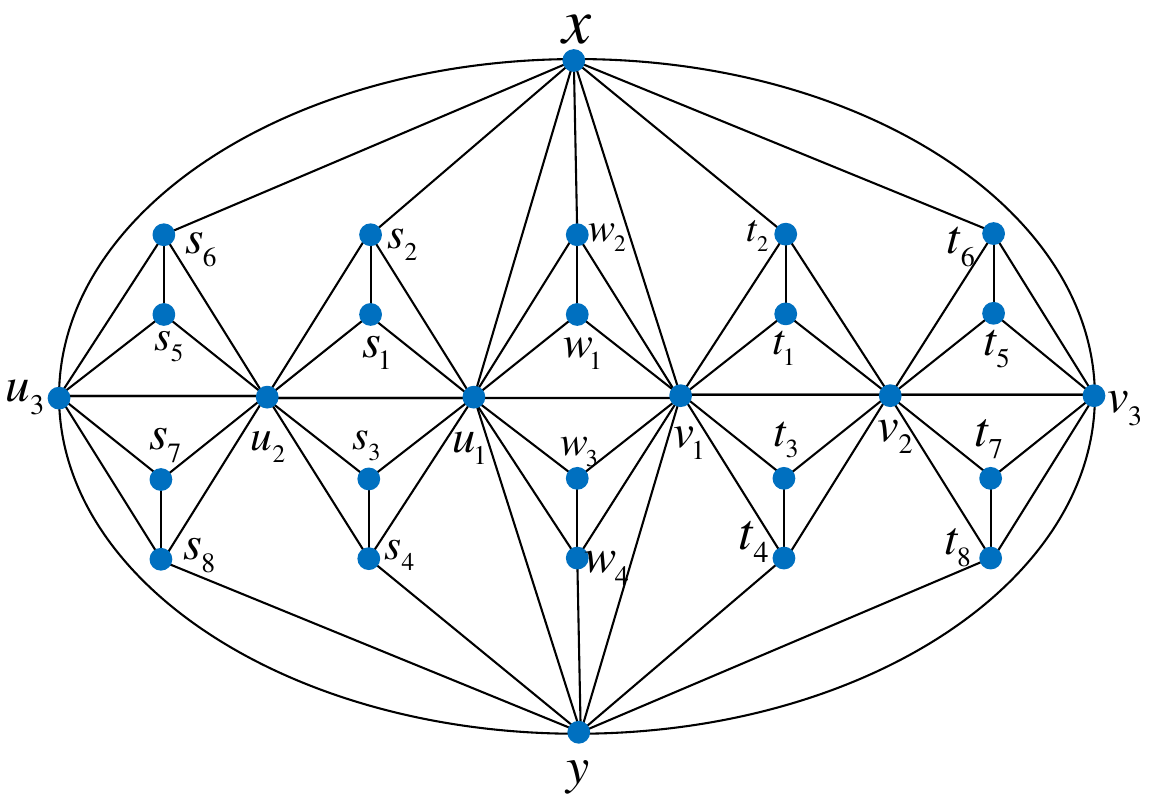}
     \caption{The graph $H$}
     \label{H}
 \end{figure}
 
 \begin{proof}
     Let $H$ be the graph in Figure \ref{H}. For $a \ne b$,  let $L$ be a list-assignment of $H$ defined as follows:
     \begin{itemize}
         \item $L(x)=\{a\}, \ L(y)=\{b\}$,
         \item $L(u_i)=L(v_i)=\{a,b,1,2,3,4,5\}$ for $i=1,2,3$,
         \item $L(w_1)=L(s_1)=L(t_1)=\{1,2,3\}, \ L(w_2)=L(s_2)=L(t_2)=\{a,1,2,3\}, \ L(w_3)=\{3,4,5\}, \ L(w_4)=\{b,3,4,5\}$,
         \item $L(s_3)=L(t_3)=\{1,2,4\}, L(s_4)=L(t_4)=\{b,1,2,4\}$,
         \item $L(s_5)=L(t_5)=\{1,2,5\}, \ L(s_6)=L(t_6)=\{a,1,2,5\}, \ L(s_7)=L(t_7)=\{3,4,5\}, \ L(s_8)=L(t_8)=\{b,3,4,5\}$.
     \end{itemize}

     First we show that $H$ is not $L$-colourable. Assume to the contrary that there is an $L$-colouring $\phi$ of $H$. 

     \begin{clm}
     \label{clm-1}
     $\phi(u_1)\in \{1,2\}$ or $\phi(v_1)\in \{1,2\}$.
     \end{clm}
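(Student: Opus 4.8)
The plan is to argue by contradiction, exploiting that the two singleton-list vertices $x$ and $y$ rigidly fix two colours and thereby collapse the effective lists of the vertices around $u_1$ and $v_1$. First I would record the forced colours: since $L(x)=\{a\}$ and $L(y)=\{b\}$, every $L$-colouring has $\phi(x)=a$ and $\phi(y)=b$. Reading the adjacencies from Figure \ref{H}, I expect $u_1$ and $v_1$ to each be adjacent to both $x$ and $y$; this removes $a$ and $b$ from their available colours, so $\phi(u_1),\phi(v_1)\in\{1,2,3,4,5\}$. Likewise $w_4$ is adjacent to $y$, so $\phi(w_4)\neq b$ and hence $\phi(w_4)\in\{3,4,5\}$, while $\phi(w_3)\in\{3,4,5\}$ already holds from its list. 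With these reductions in hand, I assume for contradiction that neither $u_1$ nor $v_1$ receives a colour in $\{1,2\}$, i.e. $\phi(u_1),\phi(v_1)\in\{3,4,5\}$.

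The heart of the argument is then a single clique. I expect the four vertices $u_1,v_1,w_3,w_4$ to induce a $K_4$ in $H$ (the six edges $u_1v_1$, $u_1w_3$, $u_1w_4$, $v_1w_3$, $v_1w_4$, $w_3w_4$ all appearing in Figure \ref{H}). In any proper $L$-colouring these four vertices must receive four pairwise distinct colours. But under the contradiction hypothesis all four of $\phi(u_1),\phi(v_1),\phi(w_3),\phi(w_4)$ lie in the three-element set $\{3,4,5\}$, and four distinct colours cannot be drawn from a set of size three. This contradiction forces $\phi(u_1)\in\{1,2\}$ or $\phi(v_1)\in\{1,2\}$, which is exactly the claim.

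Two checks are where the care lies, and I regard the first as the main obstacle. The essential point is that the edges from $u_1$ and $v_1$ to \emph{both} poles $x$ and $y$ are present: without them the colouring $\phi(u_1)=a,\phi(v_1)=b$ would complete the $K_4$ legally with neither endpoint in $\{1,2\}$, so the claim would be false; thus the whole argument rests on confirming these four incidences in the figure. The second, routine, check is that the degrees match the list sizes in the degree-truncated model, namely $w_3$ has degree $3$ (neighbours $u_1,v_1,w_4$) with $|L(w_3)|=3$, and $w_4$ has degree $4$ (neighbours $u_1,v_1,w_3,y$) with $|L(w_4)|=4$; this confirms that the extra colour $b$ in $L(w_4)$ is precisely the colour neutralised by the edge $w_4y$, so no slack is left in the gadget. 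Once these incidences are verified, the clique argument above closes the claim at once.
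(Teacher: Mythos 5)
Your proof is correct and follows essentially the same route as the paper: the paper also reduces the lists of $u_1,v_1$ to $\{1,2,3,4,5\}$ (confirming their adjacency to both $x$ and $y$) and of $w_4$ to $\{3,4,5\}$, and then derives the contradiction from the fact that $\{u_1,v_1,w_3,w_4\}$ induces a $K_4$ whose four colours would all have to lie in the three-element set $\{3,4,5\}$. The figure incidences you flag as needing verification are exactly those the paper relies on.
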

     \begin{proof}
         Let $H_1=H[\{u_1,v_1,w_1,w_2,w_3,w_4\}]$. Then $\phi$ is an $L_1$-colouring of $H_1$ where $L_1(u_1)=L_1(v_1)=\{1,2,3,4,5\}, L_1(w_1)=L_1(w_2)=\{1,2,3\}$ and $L_1(w_3)=L_1(w_4)=\{3,4,5\}$.

Assume  $\phi(u_1) \not\in \{1,2\}$ and $\phi(v_1)\not\in \{1,2\}$. Then $\{\phi(u_1), \phi(v_1), \phi(w_3), \phi(w_4)\} \subseteq \{3,4,5\}$. But $\{u_1,v_1,w_3,w_4\}$ induces a copy of $K_4$, a contradiction. 
     \end{proof}

By symmetry, we assume  that $\phi(u_1)\in \{1,2\}$. Let $H_2=H[\{u_1,u_2,s_1,s_2,s_3,s_4\}]$. Then $\phi$ is an $L_2$-colouring of $H_2$ where  $L_2(u_1)=\{1,2\}, L_2(u_2)=\{1,2,3,4,5\}, L_2(s_1)=L_2(s_2)=\{1,2,3\}$ and $L_2(s_3)=L_2(s_4)=\{1,2,4\}$.
As $\{s_1,s_2,u_1\}$ induces a triangle, $\{\phi(s_1),\phi(s_2),\phi(u_1)\}=\{1,2,3\}$. Similarly, $
\{\phi(s_3), \phi(s_4), \phi(u_1)\}=\{1,2,4\}$. Therefore  $\phi(u_2)=5.$

Now, we consider graph $H_3=H[\{u_2,u_3,s_5,s_6,s_7,s_8\}]$. We have $\{\phi(s_5), \phi(s_6)\} = \{1,2\}$ and $\{\phi(s_7), \phi(s_8)\}=\{3,4\}$.
Then all the colours in the list of $u_3$ are used by its neighbours and $u_3$ cannot be properly coloured, a contradiction. 
Hence $H$ is not $L$-colourable.

     Let $G$ be a graph obtained from the disjoint union of $42$ copies $H_i$ of $H$ by identifying all the copies of $x$ into a single vertex (also named as $x$) and all the copies of $y$ into a single vertex (also named as $y$), and   adding edges $v^{(i)}_3u^{(i+1)}_3$  (where $u^{(i)}_3$ and $v^{(i)}_3$ are the copies of $u_3$ and $v_3$ in $H_i$)  for $i=1,2,\ldots, 41$, and adding an edge connecting $x$ and $y$. Then $G$ is a non-complete planar graph. It is obvious that the graph $H$ is 3-connected. For any 2-subset $S$ of $V(G)$, if $S=\{x,y\}$, then obviously $G-S$ is connected. If $S\ne \{x,y\}$, say $x \notin S$, then distinct copies of $H$ are connected via $x$ in $G-S$. As for each copy $H_i$ of $H$, $H_i-S$ is connected, we conclude that $G-S$ is connected.
     Thus $G$ is 3-connected.

     Let $L(x)=L(y)=\{a,b,c,d,e,f,g\}$. There are $42$ possible $L$-colourings $\phi$ of $x$ and $y$. Each such a colouring $\phi$ corresponds to one copy of $H$. We define the list assignment of the corresponding  copy of $H$ as $L$ by replacing $a$ with $\phi(x)$ and replacing $b$ with $\phi(y)$ (since $xy$ is an edge of $G$, $\phi(x) \ne \phi(y)$).
     It is easy to verify that $|L(v)|= \min\{d(v),7\}$ for any $v\in V(G)$. As every possible $L$-colouring of $x$ and $y$ cannot be extended to an $L$-colouring of some copy of $H$, we conclude that $G$ is not $L$-colourable. Hence $G$ is not degree-truncated $7$-choosable.
     \end{proof}

 \section{Proof of Theorem \ref{thm-planarDP}}

This section proves  3-connected non-complete planar graphs are degree-truncated DP-$16$-colourable. 

A graph $G$ is called  a {\em GDP-tree} if each block of $G$ is either a complete graph or a cycle. 
The following lemma was proved in \cite{BKP}, which is the DP-colouring version of Theorem \ref{thm-degree}. 

\begin{lem} \label{lem-dpdegree}
    Assume $G$ is a connected graph and $f \in \mathbb{N}^G$ satisfies $f(v) \ge d_G(v)$ for all $v$. Then $G$ is  DP-$f$-colourable, unless $f(v)=d_G(v)$ for each vertex $v$ and $G$ is a GDP-tree. 
\end{lem}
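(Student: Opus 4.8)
The statement is the DP-analogue of Theorem~\ref{thm-degree} (proved for ordinary list colouring in \cite{ERT,Vizing}), and as phrased it asserts only the positive direction: if $G$ is connected, $(L,M)$ is a cover with $|L(v)|\ge f(v)\ge d_G(v)$, and we are \emph{not} in the excepted case, then $G$ is $(L,M)$-colourable. The engine throughout is that in a cover each already-coloured neighbour of $v$ deletes at most one colour from $L(v)$, so greedy colouring along a suitable order succeeds whenever every vertex sees fewer forbidden colours than its list size. I would first dispose of the slack case. Suppose $f(v_0)>d_G(v_0)$ for some $v_0$. Since $G$ is connected, order $V(G)$ by non-increasing distance to $v_0$ and colour greedily: every $v\ne v_0$ has a neighbour nearer to $v_0$, hence a later neighbour, so at most $d_G(v)-1<|L(v)|$ of its colours are forbidden, while $v_0$ sees at most $d_G(v_0)<|L(v_0)|$ forbidden colours. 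Thus $G$ is $(L,M)$-colourable, and from now on I may assume $f(v)=d_G(v)$ for every vertex $v$.

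Next I would set up an induction on $|V(G)|$. Since the non-excepted $G$ is connected with $f=d_G$ and is not a GDP-tree, some block $B^\ast$ is neither a complete graph nor a cycle; as $K_2$ is complete, $B^\ast$ is $2$-connected on at least three vertices. I distinguish two cases. If $G$ is not $2$-connected, then $B^\ast$ is a proper subgraph and I set $H=B^\ast$; if $G$ is $2$-connected, I use Rubin's block lemma (the structural heart of the list-colouring proof \cite{ERT,Vizing}) to find inside $G$ a theta subgraph — two vertices joined by three internally disjoint paths, equivalently a cycle with a chord — and take $H$ to be it. In both cases $H$ is a connected subgraph that is neither complete nor a cycle. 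I then order $V(G)$ by non-increasing distance to $H$ and greedily colour $V(G)\setminus V(H)$: each such vertex has a neighbour nearer $H$, hence a later neighbour, and so retains a free colour. For a vertex $v\in V(H)$ this deletes at most $d_G(v)-d_H(v)$ colours, leaving a list of size at least $d_H(v)$, so it remains to colour $H$ from a cover with $|L(v)|\ge d_H(v)$. When $G$ is not $2$-connected, $H=B^\ast$ is a strictly smaller non-excepted graph and this is precisely the induction hypothesis; when $G$ is $2$-connected, $H$ is a theta graph and I appeal to the base case below.

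It therefore remains to prove the base case directly: every theta graph is DP-$d$-colourable. With branch vertices $a,b$, colour $a$ last. The graph $G-a$ is a spider centred at $b$ (a tree), in which the three former neighbours of $a$ lose one unit of degree but keep their lists, and so carry slack; by the slack argument $G-a$ is colourable, and that slack additionally lets me steer the colours assigned to those three neighbours. The goal is to choose a colouring of $G-a$ in which two of the three neighbours of $a$ block the \emph{same} colour of $L(a)$; then $a$ faces at most two forbidden colours out of three and can be coloured, completing the colouring of $G$. Producing such a steered colouring — equivalently, ruling out that the three neighbours can be forced to block three distinct colours for every cover — is the main obstacle. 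This is exactly where DP-colouring parts company with ordinary list colouring: an even cycle is degree-choosable by a parity/rotation argument that has no analogue for covers, so cycles and theta graphs must be handled uniformly by the slack-steering above, irrespective of the parities of the three paths. I expect this base-case analysis to be the hard part; the remaining ingredients (the slack greedy and the distance ordering) are routine. Finally, the converse direction — that a GDP-tree with $f=d_G$ may fail to be DP-colourable — is standard and is not needed for the lemma as stated.
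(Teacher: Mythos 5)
The paper does not prove this lemma at all: it is quoted from reference \cite{BKP} (Bernshteyn--Kostochka--Pron), so there is no in-paper argument to compare yours against; I can only judge your proposal on its own merits. Your overall skeleton is the classical one (slack vertex $\Rightarrow$ greedy along a distance ordering; otherwise reduce to a 2-connected configuration), and the slack case and the non-$2$-connected case are fine: a block $B^\ast$ that is neither complete nor a cycle is an \emph{induced} $2$-connected subgraph, so after greedily colouring $V(G)\setminus V(B^\ast)$ towards $B^\ast$ the residual lists on $B^\ast$ have size at least $d_{B^\ast}(v)$ and induction applies. But there are two genuine gaps in the $2$-connected case.

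First, your reduction to a theta subgraph $H$ only works if $H$ is an \emph{induced} subgraph: after colouring $V(G)\setminus V(H)$, the vertex $v\in V(H)$ retains at least $d_{G[V(H)]}(v)$ colours, not $d_H(v)$ colours, and the edges of $G[V(H)]$ not in $H$ still carry matching constraints. (With a non-induced theta you could land on $G[V(H)]=K_4$, which is \emph{not} DP-degree-colourable, so the step genuinely fails.) You attribute the existence of the needed configuration to Rubin's block lemma, but that lemma yields an induced even cycle with at most one chord; the chordless even cycle is a legitimate outcome of Rubin's lemma and is exactly the case where DP-colouring diverges from list colouring (even cycles are not DP-degree-colourable, which is why the exceptional class here is GDP-trees rather than Gallai trees). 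So you need a different structural statement --- that every $2$-connected graph other than a cycle or a complete graph contains an \emph{induced} theta --- and this is neither what Rubin's lemma says nor something you prove. Second, you explicitly leave the base case (theta graphs are DP-degree-colourable) unproven, and this is the heart of the lemma. It is in fact not hard, but the efficient route is not "colour $G-a$ and steer": rather, let $a$ be a branch vertex with $|L(a)|=3$ and let $u,w$ be its neighbours on the two paths of length at least $2$, so $u,w$ are non-adjacent with $|L(u)|=|L(w)|=2$. If some colour of $L(u)$ (or of $L(w)$) is unmatched into $L(a)$, choose it; otherwise the images of $L(u)$ and $L(w)$ in $L(a)$ are two $2$-subsets of a $3$-set and hence intersect, so one can colour $u$ and $w$ so that together they forbid at most one colour of $L(a)$. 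Then colour $G-\{u,w\}$ greedily towards $a$ (it is connected), and $a$ ends with at most $1+(d(a)-2)=2<3$ forbidden colours. Without the induced-theta lemma and this base case actually carried out, the proposal does not yet constitute a proof.
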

 
Assume $G$ is a non-complete 3-connected planar graph  and $(L,M)$ is a cover of $G$ with $|L(v)| =  \min \{16, d_G(v)\}$ for each vertex $v$.   
We shall show that $G$ is $(L,M)$-colourable.

Let $V_1=\{v \in V(G): d_G(v) < 16\}$, $  V_2 = \{v \in V(G): d_G(v) \ge 16\}$. Assume $G$ is embedded in the plane. 
We say that two vertices $u,v$ are {\em visible} to each other if they are incident to the same face of $G$. We say that $u$ is visible to a subset $X$ of $G$ if $u$ is visible to some vertex in $X$. By adding edges between vertices of $V_2$ (if needed), we may assume that 
\begin{enumerate}
    \item[(A1)] any two vertices of $V_2$ visible to each other are adjacent,
     and 
     \item[(A2)] each face of $G[V_2]$ contains at most one connected component of $G[V_1]$.
\end{enumerate} 

For (A2) to hold without adding parallel edge, the condition that $G$ be 3-connected is needed.

A \emph{partial $(L,M)$-colouring} $\phi$ of $G$ is an $(L,M)$-colouring of $G[X]$ for a subset $X$ of $V(G)$. Vertices in $X$ are called coloured vertices and other vertices are uncoloured. Given a partial $(L,M)$-colouring  $\phi$   of $G[X]$, we denote by $(L^{\phi}, M^{\phi})$ the cover
of $G-X$ defined as follows:
$$\forall u \in V(G-X), L^{\phi}(u)=L(u)-N_{(L,M)}(\phi(X)),$$
and for each edge $e=xy$ of $G-X$, $M^{\phi}_e$ is the restriction of $M_e$ to $L^{\phi}(x) \cup L^{\phi}(y)$. 

  Let $\prec$ be an order on  $V_2$ such that each vertex $v$ has at most 5 neighbours $w$ with $w \prec v$.  Since every planar graph is 5-degenerate, such an order $\prec$ exists. Moreover, we assume that vertices of a connected component of $G[V_2]$ are consecutive in this ordering.
 
  We shall colour the vertices of $G$ one by one. For each vertex $v$ of $G$, we denote by $X_v$ the set of vertices coloured before $v$, and denote by $\phi_v$ the partial $(L,M)$-colouring of $G[X_v]$.
Let $Y_v = X_v \cup \{v\}$, and denote by $\psi_v$ the partial $(L,M)$-colouring of $G[Y_v]$.

A connected component $Q$ of $G[V_1]$ is called {\em safe} with respect to a partial $(L,M)$-colouring $\phi$ of $G[X]$ if either $Q-X$ is not a GDP-tree, or there is a vertex $v \in Q-X$ with $|L^{\phi}(v)| > d_{G-X}(v)$. 

Assume we have constructed a partial $(L,M)$-colouring $\phi$ that colours $G[X]$. We choose the next vertex to be coloured as follows:
\begin{itemize}
\item[(R1)] If there is a connected component $Q$ of $G[V_1]$ and a vertex $v \in Q-X$ for which the following hold:
\begin{itemize}
    \item $Q$ is not safe with respect to $\phi$,
    \item $v$ is not a cut-vertex of $Q-X$, 
    \item $v$ is not adjacent to any vertex in $V_2-X$,
\end{itemize}   
then colour $v$ by an arbitrary colour in $L^{\phi}(v)$.
\item[(R2)] If (R1) does not apply, then we colour the least uncoloured vertex $v$ of $V_2$.
\end{itemize}

When (R2) is applied, the colour for $v$ will be chosen carefully. We shall describe the rule for choosing the colour for $v$ later.

By applying the above rules for choosing the next vertex to be coloured, the following always hold:
\begin{itemize}
\item[(C1)] For each connected component $Q$ of $G[V_1]$, for any vertex $v$ of $G$, $Q-X_v$ is connected.
\item[(C2)] For each vertex $v$ of $V_1$, for any vertex $u$ coloured before $v$,  $|L^{\psi_u}(v)| \ge d_{G-Y_u}(v)$. 
\item[(C3)] If $Q$ is a safe component of $G[V_1]$ with respect to $\phi_v$, then $Q$ remains safe with respect to $\psi_v$.
\item[(C4)] For each vertex $v$ of $V_2$, $|L^{\phi_v}(v)| \ge 11$.
\end{itemize}
(C1) follows trivially from (R1).   (C2) and (C3) hold because for any vertex   $u$ coloured before $v$,  if $u$ is not adjacent to $v$, then $L^{\psi_u}(v) = L^{\phi_u}(v)$ and $ d_{G-Y_u}(v) = d_{G-X_u}(v)$; if $u$ is  adjacent to $v$, then $|L^{\psi_u}(v)| \ge  |L^{\phi_u}(v)|-1$ and $ d_{G-Y_u}(v)=d_{G-X_u}(v) -1$.   (C4) holds because no neighbours of $v$ in $V_1$ are coloured before $v$, and at most 5 neighbours of $v$ in $V_2$ are coloured before $v$. Hence $|L^{\phi_v}(v)| \ge 16-5=11$.

Our goal is to colour all the vertices of $V_2$ and some vertices of $V_1$ so that  every connected component $Q$ of $G[V_1]$ becomes safe. If 
  this goal is achieved and $v$ is the last vertex of $V_2$, then by Lemma \ref{lem-dpdegree},  for each connected component $Q$ of $G[V_1]$, $Q-Y_v$ is $(L^{\psi_v}, M^{\psi_v})$-colourable, and hence $G-Y_v$ is $(L^{\psi_v}, M^{\psi_v})$-colourable. This implies that $G$ is $(L,M)$-colourable.

\begin{definition}
    \label{def-protector}
    Assume $Q$ is a connected component of $G[V_1]$ and $v \in V_2$. If $Q$ is non-safe with respect to $\phi_v$, and safe with respect to $\psi_v$, then we say $Q$ is {\em protected} by $v$, and $v$ is the {\em protector} of $Q$. 
\end{definition}

For $v \in V_2$ to protect $Q$, it entails finding a vertex $u \in V(Q)-X_v$ adjacent to $v$, and colour $v$ by a colour   $c \in L^{\phi_v}(v)-N_{(L,M)}(L^{\phi_v}(u))$. By colouring $v$ by such a colour $c$, we have $|L^{\psi_v}(u)| >   d_{G-Y_v}(u)$, and therefore $Q$ is safe with respect to $\psi_v$. Therefore, for $v$ to protect $Q$, in the worst case, $|L^{\phi_v}(u)| = d_{G-X_v}(u) $ colours in $L^{\phi_v}(v)$ cannot be used. We say $d_{G-X_v}(u)$ is the {\em cost} for $v$ to protect $Q$.

 Choosing carefully $v$ and $u$, we shall show that the cost of $v$ to protect a component $Q$ of $G[V_1]$ is at most 5. As   $|L^{\phi_v}(v)| \ge 11$,  we can choose a colour for $v$ from $L^{\phi_v}(v)$ to protect two connected components of $G[V_1]$. 

To complete the proof of Theorem \ref{thm-planarDP}, it 
  remains to assign to each connected component $Q$ of $G[V_1]$ a protector $v \in V_2$ so that 
  \begin{enumerate}
      \item[(D1)] each vertex $v$ is the protector of at most two connected components of $G[V_1]$.
      \item[(D2)] the cost for $v$ to protect a component of $G[V_1]$ is at most 5.
  \end{enumerate}

We need  a lemma for the  construction of such an assignment. 
 
\begin{definition}
    \label{def-thetag}
    Assume $\Gamma$ is a plane graph. We denote by $F(\Gamma)$ the set of faces of $\Gamma$. For $\theta \in F(\Gamma)$, let $V(\theta)$ be the vertices on the boundary of $\theta$. 
Let $\Theta(\Gamma)$ be the bipartite graph with partite sets $V(\Gamma)$ and $F(\Gamma)$, with $v\theta \in E(\Theta(\Gamma))$ if $v \in V(\theta)$. 
\end{definition}

    \begin{definition}
    \label{def-nice}
    A subgraph $H$ of $\Theta(\Gamma)$ is {\em nice} if $d_{H}(v) \le 2$ for each vertex $v$ of $\Gamma$, and $d_{H}(\theta) \ge  d_{\Theta(\Gamma)}(\theta)-2$ for each face $\theta $ of $\Gamma$. Moreover, $N_{\Theta(\Gamma)}(\theta) - N_H(\theta) \subseteq V(B)$ for a block $B$ of $\Gamma$.
\end{definition}

\begin{lem}
    \label{lem-nice}
    For any plane graph $\Gamma$, $\Theta(\Gamma)$ has a nice subgraph.
\end{lem}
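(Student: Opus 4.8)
The plan is to recast the existence of a nice subgraph as a degree-constrained subgraph problem in the bipartite incidence graph $\Theta(\Gamma)$. Writing $\bar H = \Theta(\Gamma) - E(H)$ for the set of \emph{deleted} incidences, the three requirements of niceness translate as: $d_{\bar H}(\theta) \le 2$ for every face $\theta$; $d_{\bar H}(v) \ge d_{\Theta(\Gamma)}(v) - 2$ for every vertex $v$; and for each face $\theta$ the vertices of $N_{\bar H}(\theta) = N_{\Theta(\Gamma)}(\theta) - N_H(\theta)$ lie in a common block of $\Gamma$. Note that the block condition is vacuous unless a face deletes \emph{exactly} two vertices, so it only constrains faces used to full capacity; in particular it is automatic when $\Gamma$ is $2$-connected, where the unique block is all of $V(\Gamma)$.

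First I would ignore the block condition and produce $\bar H$ satisfying the two degree constraints. This is a bipartite degree-constrained subgraph problem: choose $x_{v\theta}\in\{0,1\}$ with $\sum_{\theta} x_{v\theta} \ge d_{\Theta}(v)-2$ and $\sum_{v} x_{v\theta}\le 2$. Since the incidence matrix of a bipartite graph is totally unimodular, integral feasibility is equivalent to fractional feasibility, which by the max-flow/deficiency-Hall criterion amounts to $\sum_{v\in W}a(v)\le \sum_{\theta}\min\{2,\,|V(\theta)\cap W|\}$ for every $W\subseteq V(\Gamma)$, where $a(v)=\max\{0,\,d_\Theta(v)-2\}$. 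It suffices to test this on sets $W$ of vertices with $d_\Theta(v)\ge 3$ (vertices of smaller degree impose no demand and only enlarge the right-hand side), for which $a(v)=d_\Theta(v)-2$. Double counting the incidences between $W$ and the faces, i.e. $\sum_{v\in W}d_\Theta(v)=\sum_{\theta}|V(\theta)\cap W|$, then rewrites the condition as $\sum_{\theta}\big(|V(\theta)\cap W|-2\big)^{+}\le 2|W|$, where $x^{+}=\max\{x,0\}$. To prove this I would use that $\Theta(\Gamma)$ is a simple \emph{planar} bipartite graph (place each face-vertex inside its face and join it to the incident vertices without crossings). Restricting to $W$ and to the faces $\theta$ with $|V(\theta)\cap W|\ge 2$, and applying the bipartite planar edge bound $|E|\le 2N-4$ for a simple bipartite planar graph on $N$ vertices (treating isolated and tiny components separately), yields $\sum_{\theta}\big(|V(\theta)\cap W|-2\big)^{+}\le 2|W|-4$. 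This establishes the Hall condition and hence proves the lemma whenever the block condition is vacuous, in particular for $2$-connected $\Gamma$.

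The remaining, and main, difficulty is to meet the block condition in the presence of cut vertices, i.e. to guarantee that the at most two deletions at every face come from a single block. Here I would exploit the structural fact that the edges of a fixed block incident to a vertex appear consecutively in the rotation around that vertex, so a face can see two distinct blocks only across a cut vertex, and such \emph{transition} faces are few and local. Concretely, I would convert a solution of the block-free problem into a block-respecting one by an exchange/augmenting-path argument: whenever a face $\theta$ deletes two vertices $u,u'$ lying in no common block, reroute one of the two deletions to another face incident to $u'$ that is below capacity, using the slack $2|W|-4$ in the Hall inequality to guarantee that such an augmenting path exists, and then repeat. Carrying out the rerouting without violating the degree bounds and proving that the process terminates is the technical heart of the argument; an equivalent route is induction on the block-cut tree, deleting a leaf block, applying the induction hypothesis to the resulting smaller plane graph, and re-establishing niceness across the cut vertex at which the removed block was attached. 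I expect this coupling of the single-block constraint with the global degree bounds to be the principal obstacle, the degree conditions themselves being handled cleanly by the planarity count above.
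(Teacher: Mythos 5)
Your reduction of the two degree conditions to a bipartite degree-constrained subgraph problem is correct: the Hall/flow feasibility condition does rewrite, via double counting, as $\sum_{\theta}\bigl(|V(\theta)\cap W|-2\bigr)^{+}\le 2|W|$, and the planarity of the vertex--face incidence graph together with the bound $|E|\le 2N-4$ for simple bipartite plane graphs does verify it (with slack $-4$ per component containing a face). This is a genuinely different, and for that part cleaner, route than the paper, which obtains the degree conditions as a by-product of a single induction. However, the proposal does not prove the lemma: the block condition is exactly where you stop, and it is not a cosmetic add-on --- it is used essentially in the proof of Theorem \ref{thm-planarDP} (it is what guarantees that the at most two vertices of $V(\theta_Q)$ missed by $H$ lie in a single block, hence a single component $G_i$ of $G[V_2]$, so that the argument locating a protector in the last component $G_k$ goes through). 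Your first suggested repair, rerouting a deletion along an augmenting path, has a real feasibility/termination problem: the global slack $2|W|-4$ in the Hall inequality does not give you a local spare face at the particular cut vertex where two blocks meet, and you give no invariant ensuring the process halts.

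Your second suggestion --- induction on the block-cut tree --- is the paper's actual strategy, but it does not close as stated, and the missing idea is a \emph{strengthening of the inductive statement}. The paper proves that for every plane graph $G$ and every vertex $v^*$ on the outer face, $\Theta(G)$ has a \emph{very nice} subgraph: nice, with the outer face losing no vertices ($d_H(\theta^*_G)=d_{\Theta(G)}(\theta^*_G)$) and with $d_H(v^*)=1$. Both extra conditions are needed precisely at the gluing step you defer: when a leaf block $B$ is reattached at a cut vertex $v'$, one applies the hypothesis to $B$ with $v^*:=v'$ so that $v'$ ends with $H$-degree at most $1+1=2$ after identification, and the saturation of $B$'s outer face guarantees that the face of $G-U(B)$ into which $B$ is inserted does not acquire missing vertices from two different blocks. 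Without this strengthening, a naive induction can leave the cut vertex with degree up to $4$ and a shared face missing vertices from distinct blocks. So the verdict is: the degree-condition half is correct and independently interesting, but the block condition --- which you yourself identify as the technical heart --- is unproved, and the known way to prove it requires reformulating the whole statement before inducting, not post-processing a solution of the relaxed problem.
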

 
The proof of this lemma is delayed to the next section. We continue the proof of Theorem \ref{thm-planarDP} by using this lemma. 

Let $H$ be a nice subgraph of $\Theta(G[V_2])$.
For a connected component $Q$ of $G[V_1]$, let $\theta_Q$ be the face of $G[V_2]$ containing $Q$. 

\begin{definition}
    \label{def-potential}
    For a connected component $Q$ of $G[V_1]$ which is a GDP-tree, vertices in $N_H(\theta_Q)$ are called {\em potential protectors} of $Q$.
\end{definition}

By the definition of $H$, each vertex $v \in V_2$ is the potential protector of at most 2 connected components of $G[V_1]$. 

\medskip
\noindent
{\bf Protector assignment rule:}
    {\em Assume $Q$ is a connected component of $G[V_1]$, which is a GDP-tree. If $v \in N_H(\theta_Q)$, $Q$ is non-safe with respect to $\phi_v$, and $v$ has a neighbour $u \in V(Q)-X_v$ with $d_{G-X_v}(u) \le 5$, then $v$ is assigned to be the protector of $Q$, and is coloured by a colour $c \in L^{\phi_v}(v) - N_{(L,M)}(L^{\phi_v}(u))$.}

\medskip

Since the protector of $Q$ 
 is chosen from $N_H(\theta_Q)$,   each vertex $v \in V_2$ is the protector of at most 2 connected components of $G[V_1]$. 
 Moreover, by the rule of choosing colour for $v$, $Q$ is indeed protected by $v$, and the cost of $v$ be a protector of $Q$ is at most $5$.   It remains to show that if a connected component $Q$ of $G[V_1]$ is a GDP-tree, then $Q$ will eventually be assigned a protector.

Assume to the contrary that $Q$ is a connected component of $G[V_1]$ which is a GDP-tree, and $Q$ is not assigned a protector by applying the protector assignment rule. In other words, for any vertex $v \in N_H(\theta_Q)$, if $u \in V(Q)-X_v$ is adjacent to $v$, then $d_{G-X_v}(u) \ge 6$.

As $|V(\theta_Q) - N_H(\theta_Q)| \le 2$, we may assume that $v_1,v_2 \in V(\theta_Q)$ and $V(\theta_Q) - N_H(\theta_Q) \subseteq \{v_1, v_2\}$.

Assume $G_1, G_2, \ldots, G_k$ are connected components of $G[V_2]$ that contain vertices adjacent to $Q$.  
We may assume that $x \prec y$ if $x \in V(G_i), y \in V(G_j)$ and $i < j$. Hence $G_k$ is the last connected component of $G[V_2]$ that contains vertices adjacent to $Q$. Let $Q'$ be the subgraph of $Q$ induced by vertices visible to $V(G_k)$.  

\begin{definition}
    \label{def-leaf}
    Assume $B$ is a leaf block of $Q'$. If  $B$ contains a cut-vertex $u$ of $Q'$, then we call $u$ the {\em root vertex} of $B$, and other vertices of $B$ are called {\em non-root vertices} of $B$.  If $B$ contains no cut-vertex of $Q'$, then $Q'=B$ and all vertices of $B$ are non-root vertices of $B$. We denote by $U(B)$ the set of non-root vertices of $B$.
\end{definition}

Let $\theta_{Q'}$ be the face of $G_k$ that contains $Q'$. Note that $\theta_{Q'} \subseteq \theta_{Q}$ and $V(\theta_{Q'}) = V(\theta_Q) \cap V(G_k)$, and that every vertex in $V(\theta_{Q'})$ is adjacent to some vertex of $Q'$. Let $N_H(\theta_{Q'}) = N_H(\theta_{Q}) \cap V(\theta_{Q'})$. 

\begin{lem}
    \label{lem-three}
    There is a vertex $v \in N_H(\theta_{Q'})$ adjacent to a non-root vertex of a leaf block of $Q'$.  
\end{lem}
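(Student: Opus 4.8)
The plan is to argue topologically in the closed disk bounded by the face $\theta_{Q'}$ of $G_k$. Write $W$ for the boundary walk of $\theta_{Q'}$; its vertices are exactly $V(\theta_{Q'}) = V(\theta_Q)\cap V(G_k)$, and by hypothesis every vertex of $W$ is adjacent to some vertex of $Q'$. First I would record two features of this configuration. Since $H$ is nice (Definition \ref{def-nice}), we have $V(\theta_{Q'}) - N_H(\theta_{Q'}) \subseteq \{v_1,v_2\}$, and moreover these exceptional vertices lie in a single block of $G[V_2]$; because the boundary walk of any face meets each block in a contiguous sub-walk (the walk enters and leaves a block only at its cut vertices), $v_1$ and $v_2$ occupy a contiguous segment of $W$. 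Thus \emph{all} boundary vertices outside one short arc belong to $N_H(\theta_{Q'})$. Second, $Q'$ is a nonempty subgraph of the GDP-tree $Q$ drawn inside the disk; treating each of its components in turn, it has a well-defined block--cut structure and, by Definition \ref{def-leaf}, at least one leaf block.

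The heart of the argument is to show that some leaf block of $Q'$ is ``exposed'' to the boundary, i.e. has a non-root vertex adjacent to a vertex of $W$. I would establish this from the planar embedding: reading the cyclic order of $W$ together with the edges running from $W$ into $Q'$, the vertices of $Q'$ incident to the region of $Q'$ facing $W$ form the \emph{contour} of $Q'$, and every vertex of $W$ attaches to this contour. In the block--cut tree of $Q'$ a leaf block is a tip, and since deleting its root vertex detaches its interior from the remainder of $Q'$, the non-root vertices of a leaf block appear consecutively along the contour; hence each leaf block contributes an arc of attachment vertices to $W$, at least one of which is a non-root vertex.

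With exposure in hand I would finish by a position argument against the two exceptional vertices. If $Q'$ is a single block, then every vertex of $Q'$ is a non-root vertex, and it suffices to find one boundary vertex of $N_H(\theta_{Q'})$ adjacent to $Q'$; this holds because $3$-connectivity of $G$ forces $|V(\theta_{Q'})| \ge 3$, so the boundary vertices cannot all lie in $\{v_1,v_2\}$. If $Q'$ has at least two leaf blocks, their attachment arcs are distributed around $W$ while $v_1,v_2$ cover only one contiguous segment; I would select a leaf block whose attachment arc meets $W$ outside the $\{v_1,v_2\}$-segment, so that its exposed non-root vertex is adjacent to a boundary vertex in $N_H(\theta_{Q'})$, as required. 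Should the direct position argument leave ambiguous cases, I would instead assume the conclusion fails, deduce that the non-root vertices of every leaf block attach (among boundary vertices) only to $\{v_1,v_2\}$, and, using that $v_1,v_2$ share one block of $G[V_2]$ together with the root vertex of such a leaf block, exhibit a $2$-cut isolating its interior, contradicting the $3$-connectivity of $G$.

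The step I expect to be the main obstacle is the exposure claim of the second paragraph --- converting the intuition that ``a leaf block sticks out to the boundary'' into a rigorous statement about the planar embedding, while correctly handling non-root vertices that may also have neighbours in $Q \setminus V(Q')$ or in the earlier components $G_1,\dots,G_{k-1}$. Pinning down the interaction between the contour of $Q'$ and the single contiguous arc occupied by $v_1,v_2$, and disposing of the degenerate small-$G_k$ cases, is where the care is needed; the $3$-connectivity hypothesis is the essential tool that rules out the bad configuration.
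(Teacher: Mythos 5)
Your argument has a genuine gap, and it sits exactly where the real difficulty of the lemma lies. The central ``position argument'' rests on the claim that $v_1,v_2$ occupy a contiguous segment of the boundary walk $W$ because they lie in a single block of $G[V_2]$. That inference is false: two vertices of the same block need not be consecutive on a face boundary (think of opposite vertices of a $4$-cycle bounding the face), and any block vertices lying between them on $W$ belong to $N_H(\theta_{Q'})$, so the ``exceptional arc'' is not an arc at all --- $v_1$ and $v_2$ can sit far apart on $W$ and jointly cut it into two pieces. Consequently a leaf block of $Q'$ really can have all its non-root attachments to the boundary contained in $\{v_1,v_2\}$; this is precisely the configuration the paper has to exclude, and it does so not by contiguity but by a planarity/separation argument: if two leaf blocks $B_1,B_2$ both had $N_G(U(B_i))=\{v_1,v_2\}$, a cycle through $v_1,v_2,U(B_1),U(B_2)$ would enclose the rest of $Q'$ and leave the remaining vertices of $V(\theta_{Q'})$ with no neighbour in $Q'$, contradicting the standing fact that every vertex of $V(\theta_{Q'})$ is adjacent to $Q'$. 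Your proposal never produces this contradiction, and no $2$-cut argument applies in that case.

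The ``exposure'' claim of your second paragraph is also unproved and, as stated for \emph{every} leaf block, false: a non-root vertex of a leaf block may have all of its $V_2$-neighbours in the earlier components $G_1,\dots,G_{k-1}$ (already coloured), so the block need not attach to $W$ at all. The paper obtains an edge from $U(B_1)$ to a vertex of $N_H(\theta_{Q'})$ only for a suitably chosen leaf block, via $3$-connectivity: a path in $G-\{u,v_1\}$ from $V(\theta_{Q'})$ to $U(B_1)$ is forced, after normalisation, to be a single edge because the root $u$ separates $U(B_1)$ from the rest of $Q'$. Your fallback sketch (assume failure, exhibit a $2$-cut) is in the spirit of that second case, but it is not carried out and does not cover the first case above, where the contradiction comes from planarity rather than from a small cut. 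To repair the proof you should drop the contiguity claim, split on whether both leaf blocks attach to both of $v_1,v_2$, and supply the separation argument and the explicit $3$-connectivity path argument respectively.
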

\begin{proof}  
    Assume to the contrary that no vertex in $N_H(\theta_{Q'}) $ is adjacent to a non-root vertex of a leaf block of $Q'$. Then $Q'$ has at least two leaf blocks, for otherwise, every vertex of $Q'$ is a non-root vertex and  every vertex in $N_H(\theta_{Q'})$ is adjacent to a non-root vertex of $Q'$. Let   $B_1,B_2$ be two leaf blocks of $Q'$. Then $N_G(U(B_i)) \subseteq \{v_1, v_2\}$ for $i=1,2$. 
    
    If $N_G(U(B_i)) = \{v_1, v_2\}$ for $i=1,2$, then
there is a cycle $C$ induced by a subset of $\{v_1,v_2\} \cup U(B_1) \cup U(B_2)$  such that all vertices of $Q'-C$ are contained in the interior of $C$, and the vertices of $V(\theta_{Q'})-\{v_1,v_2\}$ are contained in the exterior of $C$, and not adjacent to $U(B_1) \cup U(B_2)$. This contradicts the fact that every vertex of $V(\theta_{Q'})$ is adjacent to some vertex of $Q'$.   

    Without loss of generality, assume $v_2 \notin N_G(U(B_1))$. 
  Let $u$ be the root vertex of $B_1$. Since $G$ is 3-connected, there is a path $P$ in $G-\{u,v_1\}$ that connects a vertex of $v \in V(\theta_{Q'})$ and $U(B_1)$. We may assume that $P-\{v\}$ contains no vertex of $V(\theta_{Q'})$, and all vertices of $P-\{v\}$ are visible to some vertex in $V(\theta_{Q'})$.
  Then, all the vertices in $P-\{v\}$ are vertices of $Q'$. Since $u$ separates $U(B_1)$ from $Q'-B_1$, we conclude that $P$ is an edge that connects $v \in N_{H}(\theta_{Q'})$ and $u\in U(B_1)$, a contradiction.  
\end{proof}

Let $v$ be the largest vertex in $N_H(\theta_{Q'})$  adjacent to a non-root vertex $u$ of a leaf block $B$ of $Q'$. Then $N_{V_2-X_v}(u) \subseteq \{v_1,v_2, v\}$.  For $i=1,2,\ldots, k-1$, $V(G_i) \subseteq X_v$. 

Note that $B$ may be contained in a block $B'$ of $Q$ which is a copy of $K_4$. In this case, $B'$ contains a vertex $w$ not visible to $G_k$, and hence $w \in X_v$ (by (R1)). So $B$ is either a cycle or a complete graph of order at most 2. 
   Thus $d_{Q-X_v}(u) \le 2$.
Hence $d_{G-X_v}(u) \le 5$, and $v$ is a protector of $Q$, a contradiction.

This completes the proof of Theorem \ref{thm-planarDP}, except that Lemma \ref{lem-nice} is not proved yet.

\section{Proof of Lemma \ref{lem-nice}}

For the purpose of using induction, we prove a slightly stronger result.

For a plane graph $G$,  we denote by $\theta^*_G$ the infinite face of $G$.   

\begin{definition}
    \label{def-verynice}
   Let $v^*$ be a vertex on the boundary of the infinite face $\theta^*_G$. A nice subgraph $H$ of $\Theta(G)$ is called   {\em very nice} with respect to $v^*$  if 
    $d_H(\theta^*_G) = d_{\Theta(G)}(\theta^*_G)$ and $d_H(v^*) =1$.
\end{definition}

\begin{lem}
    \label{lem-verynice}
    For every  plane graph $G$, for any vertex $v^*$ on the boundary of infinite face of $G$, $\Theta(G)$ has a very nice subgraph with respect to $v^*$.
\end{lem}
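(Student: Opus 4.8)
```latex
The plan is to prove Lemma~\ref{lem-verynice} by induction on the number of edges of $G$, since the stronger statement (controlling the infinite face and a boundary vertex $v^*$) is exactly what makes an inductive argument go through: when we peel off part of the graph, the newly exposed region becomes the infinite face of a smaller graph, and we can designate a suitable boundary vertex as the new $v^*$. The target conditions are that each vertex $v$ has $d_H(v)\le 2$, each face $\theta$ is nearly saturated ($d_H(\theta)\ge d_{\Theta(G)}(\theta)-2$) with the at-most-two missing incidences confined to a single block, and additionally $d_H(\theta^*_G)=d_{\Theta(G)}(\theta^*_G)$ together with $d_H(v^*)=1$. The extra slack we gain is that $v^*$ uses only one of its two allowed incidences, leaving a free ``slot'' at $v^*$ that can be spent when $v^*$ is absorbed into a face of a larger graph.

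First I would dispose of the base and degenerate cases: if $G$ is a single vertex, a tree, or more generally has cut structure, handle it directly or reduce to blocks. The natural reduction is to work block by block. If $G$ is disconnected or has a cut vertex, I would split $G$ along the block--cut-tree, apply induction to each piece with $v^*$ chosen appropriately on its infinite face, and then glue the resulting very nice subgraphs; the condition ``$N_{\Theta(G)}(\theta)-N_H(\theta)\subseteq V(B)$ for a single block $B$'' is precisely designed to survive this gluing, because the deficiencies at each face stay localized within one block. So the crux is the $2$-connected case, where every face is bounded by a cycle.

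For a $2$-connected plane graph I would induct by removing a carefully chosen edge or vertex on the boundary of the infinite face. Let $v^*$ lie on $\theta^*_G$. The idea is to pick an edge $e=v^*w$ on the boundary of the infinite face and consider $G-e$ (or, if $e$ is a bridge after earlier reductions, contract/delete appropriately); deleting $e$ merges the infinite face with the bounded face $\theta$ on the other side of $e$, producing a smaller plane graph $G'$ whose infinite face is this merged region. By induction $\Theta(G')$ has a very nice subgraph $H'$ with respect to a chosen boundary vertex. I then reconstruct a very nice subgraph $H$ of $\Theta(G)$ by reinstating the incidences around the re-split face $\theta$ and adjusting the single free slot at $v^*$. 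The bookkeeping must ensure (i) no vertex degree in $H$ exceeds $2$, (ii) the newly created bounded face $\theta$ loses at most two incidences, all within one block, and (iii) the infinite face of $G$ is fully saturated while $v^*$ keeps degree exactly $1$. The role of $d_H(v^*)=1$ is to guarantee that when $v^*$ becomes an interior boundary vertex of the merged face in $G'$, it still has a spare incidence to give to that face, keeping the ``at most two missing per face'' invariant intact.

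The main obstacle I anticipate is the degree bookkeeping at the boundary: each face is allowed to shed at most two incidences and those must lie in a single block, while simultaneously each vertex can serve at most two faces. When a bounded face $\theta$ is bounded by a long cycle, nearly all its vertex--face incidences must be retained, so the two ``missing'' incidences have to be allocated with care, and the vertices chosen to drop those incidences must not already be saturated (degree $2$) by the adjacent faces they also border. Ensuring that the dropped incidences at $\theta$ localize to one block, and that the saturation constraint $d_H(v)\le 2$ is never violated when we reinstate $\theta$ after the inductive step, is the delicate part; the very-nice strengthening (a guaranteed free slot at $v^*$ and full saturation of $\theta^*_G$) is the mechanism that supplies exactly the flexibility needed to resolve these conflicts, which is why proving the stronger Lemma~\ref{lem-verynice} rather than Lemma~\ref{lem-nice} directly is essential.
```
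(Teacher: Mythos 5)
Your overall strategy---induction with the strengthened ``very nice'' statement, and reduction to the $2$-connected case via the block--cut-tree---matches the paper's proof in outline (the paper's Case~2 is essentially your gluing step). The divergence, and the gap, is in the $2$-connected case. You propose to delete a boundary edge $e=v^*w$, merge the infinite face with the bounded face $\theta$ across $e$, apply induction to $G-e$, and then re-split. This reconstruction is not carried out, and it can genuinely fail. After the split, every vertex of $V(\theta^*_G)$ must send an edge to $\theta^*_G$ (full saturation of the infinite face is required), so a vertex $x\in V(\theta^*_G)\cap V(\theta)$ can also serve $\theta$ only if it has a spare slot, i.e.\ $d_{H'}(x)\le 1$. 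The induction hypothesis guarantees a spare slot only at the one designated root vertex. But $V(\theta^*_G)\cap V(\theta)$ can contain three or more vertices (e.g.\ outer cycle $v^*wxy$ with chord $v^*x$: the triangle $v^*wx$ shares $v^*$, $w$ and $x$ with the infinite face), and if two of them besides $v^*$ are already saturated in $H'$ by interior faces, then $\theta$ loses three incidences, violating $d_H(\theta)\ge d_{\Theta(G)}(\theta)-2$. One free slot at $v^*$ cannot repair this, and you have no control over which vertices $H'$ chooses to saturate.

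The paper avoids this by never splitting one face into two faces that both demand incidences from a shared boundary. It first disposes of outerplanar graphs and of degree-$2$ vertices (by deleting an ear, respectively suppressing the vertex), and in the remaining subcase removes an \emph{interior} vertex $u$: the face $\theta_u$ of $G-u$ is subdivided into faces $\theta_1,\dots,\theta_k$ around $u$, the infinite face is untouched, and the new vertex $u$ itself supplies two fresh incidences that absorb the at most two deficiencies of $\theta_u$ inherited from $H'$. If you want to keep an edge-deletion induction, you would need a substantially stronger induction hypothesis (control over \emph{which} boundary vertices are left unsaturated), which the ``very nice'' statement does not provide.
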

\begin{proof}
   We shall prove Lemma \ref{lem-verynice} by induction on the number of vertices of $G$.

   \medskip
   \noindent
   {\bf Case 1} $G$ is 2-connected.
   \medskip
   
   If $G=K_1$ or $K_2$, then $H=\Theta(G)$ is very nice with respect to $v^*$. 
   
   If $G \ne K_1, K_2$ is an outerplanar graph, then $G$ contains a cycle $C=(v_1,v_2,\ldots, v_k)$, bounding a finite face $\theta$,  such that $d_G(v_i)=2$ for $i=2,3,\ldots, k-1$, and $v^* \notin \{v_2, v_3, \ldots, v_{k-1}\}$. Let $G'=G-\{v_2, v_3, \ldots, v_{k-1}\}$. Then $\Theta(G)$ is obtained from $\Theta(G')$ by adding vertices $v_2,v_3, \ldots, v_{k-1}$ and $\theta$, and edges $v_i \theta$ for $i=1, 2, \ldots, k$ and $v_i \theta^*_G$ for $i=2,3,\ldots, k-1$. 
   
   By the induction hypothesis, $\Theta(G')$ has a very nice subgraph $H'$  with respect to $v^*$. Let $H$ be obtained from $H'$ by adding vertices $v_2,v_3, \ldots, v_{k-1}$ and $\theta$, and edges $v_i \theta, v_i \theta^*_G$   for $i=2,3,\ldots, k-1$.  Then $H$ is a very nice subgraph of $\Theta(G)$ with respect to $v^*$.

   Assume $G$ is not an outerplanar graph. 
   If $G$ has a degree 2 vertex $v \ne v^*$ not contained in a triangle, then let $G'$ be obtained from $G$ by removing the vertex $v$ and adding an edge connecting the two neighbours of $v$. Then $F(G)=F(G')$ and $\Theta(G)$ is obtained from $\Theta(G') $ by adding the vertex $v$ and the edges $v \theta_1, v \theta_2$, where $\theta_1,\theta_2$ are the two faces of $G$ incident to $v$. By induction hypothesis, $\Theta(G')$ has a very nice subgraph $H'$ with respect to $v^*$. Let $H$ be obtained from $H'$ by adding vertex $v$ and edges $v \theta_1, v \theta_2$. Then $H$ is a very nice subgraph of $\Theta(G)$ with respect to $v^*$.
 
Otherwise let $u$ be a vertex not on the boundary cycle of $G$, and let $G'=G-u$. Let $\theta_u$ be the face of $G'$ containing the vertex $u$. 
Let $C$ be the boundary cycle of $\theta_u$, and let 
 $u_1,u_2,\ldots, u_k$ be the neighbours of $u$, occurring in $C$ in this cyclic order in clockwise direction.  For $i=1,2,\ldots, k$, let $P_i$ be the subpath of $C$ from $u_i$ to $u_{i+1}$ (where let $u_{k+1}=u_1$) along the clockwise direction. So $u_i,u_{i+1}$ are the end vertices of $P_i$, and $\cup_{i=1}^k(P_i-\{u_i\})$ is a partition of $V(C)$.

    In $G$, $\theta_u$ is divided into $k$ faces $\theta_1, \theta_2, \ldots, \theta_k$, where $V(\theta_i) = V(P_i) \cup \{u\}$. 
    
    Let $H'$ be a very nice subgraph of $\Theta(G')$ with respect to $v^*$. Let $Z=N_{\Theta(G')}(\theta_u) -N_{H'}(\theta_u)$. Then $|Z| \le 2$. 
    
    Assume $Z \subseteq \{z_1,z_2\}$. Assume   $z_1 \in P_i - \{u_i\}$, and $z_2 \in P_j-\{u_j\}$. 
    If $i \ne j$, then let $H$ be obtained from $H'$ by deleting  $\theta_u$ (and hence all the  edges $\{v\theta_u: v \in V(\theta_u)\}$), and adding vertices $u, \theta_1,\theta_2,
    \ldots, \theta_k$ and adding  edges 
    $$\cup_{t=1}^k\{v\theta_t: v \in V(P_t)-\{u_t\}\} \cup \{u\theta_i,u\theta_j\} - \{z_1\theta_i,z_2\theta_j\}.$$ 
    If $i=j$, then without loss of generality, assume that 
$i=j=1$. Let 
$H$ be obtained from $H'$ by deleting  $\theta_u$ (and hence all the  edges $\{v\theta_u: v \in V(\theta_u)\}$), and adding vertices $u, \theta_1,\theta_2,
    \ldots, \theta_k$ and adding  edges   
    $$\cup_{t=1}^k\{v\theta_t: v \in V(P_t)-\{u_t\}\} \cup \{u\theta_1, u\theta_k, u_1\theta_1\} - \{z_1\theta_1,z_2\theta_1, u_1\theta_k\}.$$

    It is straightforward to verify that $H$ is a very nice subgraph of $\Theta(G)$ with respect to $v^*$.

\medskip
\noindent
{\bf Case 2} 
 $G$ is not 2-connected.
 \medskip

Note that $G$ may be connected or not connected. Let $B$ be a leaf block of $G$ (which could be a 2-connected component of $G$) such that
all vertices of $G-U(B)$ are contained in the infinite face of $B$ and $v^* \notin V(B)$. It is obvious that such a leaf block $B$ exists. Let $G'=G-U(B)$. By the induction hypothesis, $\Theta(G')$ has a very nice subgraph $H'$ with respect to $v^*$.  

Let $\theta_B$ be the face of $G'$ that contains $U(B)$. 

If $B$ is a 2-connected component of $G$, then $\Theta(G)$ is obtained from the disjoint union of $\Theta(G')$ and $\Theta(B)$ by identifying $\theta_B$ with $\theta^*_B$. Let $H''$ be a very nice subgraph of $\Theta(B)$ with respect to $v'$ for an arbitrary boundary vertex $v'$ of $B$. Let $H$ be obtained from the disjoint union of $H'$ and $H''$ by identifying $\theta_B$ with $\theta^*_B$. Then $H$ is a very nice subgraph of $\Theta(G)$ with respect to $v^*$. 

Otherwise $B$ contains a cut-vertex $v'$ of $G$, and $\Theta(G)$ is obtained from the disjoint union of $\Theta(G')$ and $\Theta(B)$ by identifying $\theta_B$ with $\theta^*_B$, and identifying the copy of $v'$ in $G'$ and the copy of $v'$ in $B$. Let $H''$ be a very nice subgraph of $\Theta(B)$ with respect to $v'$. Let $H$ be obtained from the disjoint union of $H'$ and $H''$ by identifying $\theta_B$ with $\theta^*_B$, and identifying the copy of $v'$ in $H'$ and the copy of $v'$ in $H''$.   Then $H$ is a very nice subgraph of $\Theta(G)$ with respect to $v^*$. 

This completes the proof of Lemma \ref{lem-verynice}.
\end{proof}

\section{Proof of Theorem \ref{thm-minorDP}}

Note that for any proper minor closed family $\mathcal{G}$ of graphs, there exist positive integers $s,t$ such that the complete bipartite graph $K_{s,t} \notin \mathcal{G}$. This is so because there is a complete graph $K_s \notin \mathcal{G}$ (for otherwise $K_s \in \mathcal{G}$ for all $s$, and hence every graph is in $\mathcal{G}$ and $\mathcal{G}$ is not proper), and $K_s$ is a minor of $K_{s,s}$. So, the integer $s$ as described in the theorem is well defined. 

Let $t$ be the minimum integer such that $K_{s,t} \notin \mathcal{G}$. 
Let $$q = 4^{s+1}s! st (s+t-1)+1, \text{ and }  k=2^{s+2}tq.$$ We shall prove that for every graph $G \in \mathcal{G}$, if $G$ is not a GDP tree, then $G$ is degree-truncated DP-$k$-colourable.

If $s=1$, then the graphs in $\mathcal{G}$ have maximum degree at most $t-1$.  For $G \in \mathcal{G}$ which is not a GDP-tree, $G$ is  degree-DP-colourable, and hence degree-truncated DP-$k$-colourable.

In the following, assume $s \ge 2$.

The lemmas \ref{lem-1b} and \ref{lem-2} below proved in \cite{Thomason} are needed in our proof.

\begin{lem}
	\label{lem-1b} Every non-empty graph $G$ with at least $2^{s+1}t|V(G)|$ edges has a 
	 $K_{s,t}$-minor.
\end{lem}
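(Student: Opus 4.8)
The statement is an extremal result: a linear number of edges, with constant exponential in $s$ but \emph{linear} in $t$, forces a $K_{s,t}$-minor. The plan is to prove it by induction on $s$, using a density-boosting reduction to a minor in which neighbourhoods are themselves dense, so that one hub vertex can be peeled off at each step.

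First I would perform the standard degeneracy reduction. Since $|E(G)| \ge 2^{s+1}t\,|V(G)|$, the graph $G$ is not $(2^{s+1}t-1)$-degenerate, so by repeatedly discarding vertices of degree below $2^{s+1}t$ one obtains a non-empty subgraph of minimum degree at least $\delta := 2^{s+1}t$. I would then pass to a minor $H$ of this subgraph that is \emph{minor-minimal} subject to $\delta(H) \ge \delta$. The purpose of minimality is to force local density: contracting any edge $uv$ of $H$ destroys the minimum-degree condition, and analysing which vertex then drops below $\delta$ shows that $u$ and $v$ have at least $\delta-1$ common neighbours. Consequently, for every vertex $v$ the induced graph $H[N_H(v)]$ has minimum degree at least $\delta-1$: every neighbour of $v$ sees almost all of $N_H(v)$.

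With this in hand the induction on $s$ runs smoothly. In the base case $s=1$ a $K_{1,t}$-minor is just a star, and a minimum degree of $2^{2}t \ge t$ trivially supplies one. For the inductive step, pick any vertex $v$; by the previous paragraph $H[N_H(v)]$ has minimum degree at least $\delta-1 = 2^{s+1}t-1$, hence at least $(2^{s}t-\tfrac12)\,|N_H(v)|$ edges, which (up to the additive slack discussed below) lets the induction hypothesis for $s-1$, whose constant is $2^{s}t$, apply inside $N_H(v)$. This produces a $K_{s-1,t}$-minor all of whose $s+t-1$ branch sets lie in $N_H(v)$. Now $v$ is adjacent to every one of these branch sets, so adding $\{v\}$ as a new left-hand branch set (disjoint from the rest, since $v \notin N_H(v)$) while keeping the $t$ right-hand branch sets yields a model of $K_{s,t}$ in $H$, and hence in $G$; extra adjacencies among branch sets are harmless for a minor.

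The delicate point, and the main obstacle, is the density step. Contracting an edge $uv$ lowers not only the degree of the merged vertex but also the degrees of the common neighbours of $u$ and $v$, so the conclusion that $u$ and $v$ share many neighbours does not drop out of minor-minimality quite as cleanly as one would wish, and one must argue via a carefully chosen extremal (contraction-critical) $H$ or bound the common neighbourhood directly. The second bookkeeping point is the constant: each level of the induction converts a minimum-degree bound into an edge-count bound, losing a factor of two, and it is precisely this halving repeated from the base star that produces the $2^{s+1}$ while $t$ stays linear. I would therefore keep the inductive hypothesis in the convenient form \textquotedblleft $|E| \ge 2^{s+1}t|V|$ forces $K_{s,t}$\textquotedblright\ and verify that the constants survive each conversion, absorbing the additive $\tfrac12$ slack into the generous room left by the exponential factor.
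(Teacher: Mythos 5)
First, a point of comparison: the paper does not prove this lemma at all --- it is imported verbatim from Thomason \cite{Thomason}. So a correct self-contained argument would be a genuine addition, and your outline is the classical Mader-type ``contract an edge in a minimal dense minor'' induction, which is a perfectly viable and more elementary route than Thomason's sharper counting. However, as written there is a real gap, precisely at the step you yourself flag as delicate. Taking $H$ minor-minimal subject to $\delta(H)\ge\delta$ does \emph{not} yield the claim that every edge $uv$ has $\delta-1$ common neighbours: contracting $uv$ can violate the minimum-degree condition merely because some common neighbour of $u$ and $v$ had degree exactly $\delta$ and loses one edge, in which case you learn only that $u$ and $v$ have at least one common neighbour. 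Moreover, even granting the bound $\delta-1$, the graph $H[N_H(v)]$ then has only $(2^{s}t-\tfrac12)\,|N_H(v)|$ edges, which falls short of the $2^{s}t\,|N_H(v)|$ that your induction hypothesis for $s-1$ requires; an additive $\tfrac12$ per vertex cannot simply be ``absorbed'' without weakening the very statement you are inducting on.

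Both defects are cured by the standard repair, which you gesture at but do not carry out: take $H$ to be a minor of $G$, minimal among non-null minors satisfying the \emph{edge-density} condition $|E(H)|\ge 2^{s+1}t\,|V(H)|$ (such an $H$ exists since $G$ itself qualifies, and $H$ has an edge). For any edge $uv$ of $H$, writing $c=|N_H(u)\cap N_H(v)|$, the contraction $H/uv$ is a proper minor with $|E(H)|-1-c$ edges and $|V(H)|-1$ vertices, so minimality gives $|E(H)|-1-c< 2^{s+1}t\,(|V(H)|-1)\le |E(H)|-2^{s+1}t$, hence $c\ge 2^{s+1}t$. Consequently $H[N_H(v)]$ has minimum degree at least $2^{s+1}t$ and therefore at least $2^{s}t\,|N_H(v)|=2^{(s-1)+1}t\,|N_H(v)|$ edges --- exactly the induction hypothesis for $s-1$, with no slack to absorb --- and attaching $\{v\}$ as the $s$-th left-hand branch set completes the step as you describe (the base case $s=1$ is a star and is immediate). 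With that substitution your argument is correct and self-contained.
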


\begin{lem}
	\label{lem-2}
If $G$ is a bipartite graph with partite sets $A$ and $B$, and with at least $(s-1)|A|+4^{s+1}s!t|B|$ edges, then $G$ has a $K_{s,t}$-minor.
\end{lem}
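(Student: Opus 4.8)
The plan is to exploit the two terms of the hypothesis for two separate purposes: the term $(s-1)|A|$ will pay for a cleanup of low-degree vertices of $A$, while the term $4^{s+1}s!t|B|$ will drive the actual search for the minor. First I would set $A' = \{a \in A : d_G(a) \ge s\}$ and delete $A \setminus A'$. Since every deleted vertex is incident to at most $s-1$ edges, this removes at most $(s-1)|A|$ edges; moreover, as all edges run between $A$ and $B$, deleting vertices of $A$ does not lower the degree of any surviving vertex of $A'$. Hence the resulting bipartite graph $G'$ on $A' \cup B$ has at least $4^{s+1}s!t|B|$ edges and satisfies $d_{G'}(a) \ge s$ for every $a \in A'$. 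It now suffices to find a $K_{s,t}$-minor in $G'$, where every $A$-vertex has degree at least $s$.

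The main idea is to produce a minor supported on $B$ and feed it to Lemma \ref{lem-1b}. For this I would assign to each $a \in A'$ an unordered pair of its neighbours in $B$; distinct vertices $a$ then yield internally disjoint paths of length two joining the chosen pairs, so their union is a subdivision of a graph $\tilde{G}$ on vertex set $B$ whose edges are exactly the realised pairs. Thus $\tilde{G}$ is a topological minor of $G'$, and any $K_{s,t}$-minor of $\tilde{G}$ pulls back to a $K_{s,t}$-minor of $G'$. Since $\tilde{G}$ has at most $|B|$ vertices, Lemma \ref{lem-1b} guarantees a $K_{s,t}$-minor as soon as I can realise at least $2^{s+1}t|B|$ distinct pairs of $B$ using distinct vertices of $A'$.

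The whole difficulty is therefore to control \emph{collisions}: the realisation can fail only when many vertices of $A'$ are forced to use the same few pairs, i.e. when the edges are concentrated on a small part of $A$. I would resolve this by a dichotomy. If $|A'|$ is large relative to $|B|$, then a greedy (or Hall-type) assignment spreads the chosen pairs out and realises the required $2^{s+1}t|B|$ distinct edges, finishing via Lemma \ref{lem-1b}. If instead $A'$ is small, then the average $B$-degree $e(G')/|B| \ge 4^{s+1}s!t$ is forced to be large, so by convexity $\sum_{b \in B}\binom{d(b)}{s} \ge |B|\binom{4^{s+1}s!t}{s}$ exceeds $(t-1)\binom{|A'|}{s}$; by double counting, some $s$-subset of $A'$ then lies in at least $t$ of the neighbourhoods $N(b)$, i.e. has at least $t$ common neighbours in $B$, yielding a $K_{s,t}$ already as a subgraph. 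The counting of $s$-element subsets is where the factor $s!$ enters, and the gap between $4^{s+1}$ here and $2^{s+1}$ in Lemma \ref{lem-1b} is exactly the slack needed to absorb the loss incurred in the spread regime and to make the two branches of the dichotomy overlap.

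I expect the balancing of these two regimes to be the main obstacle: one must choose the threshold separating ``$A'$ large'' from ``$A'$ small'' so that both the topological-minor count and the common-neighbourhood count succeed simultaneously for the given constant. Everything else, including the cleanup and the verification that disjoint length-two paths constitute a genuine topological minor, is routine.
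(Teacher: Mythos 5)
The paper does not actually prove this statement: Lemma~\ref{lem-2} is imported verbatim from Thomason~\cite{Thomason} (``The lemmas \ref{lem-1b} and \ref{lem-2} below proved in \cite{Thomason} are needed in our proof''), so there is no internal proof to compare yours against and your argument has to stand entirely on its own. Your opening reduction does stand: since the graph is bipartite, deleting the vertices of $A$ of degree at most $s-1$ removes at most $(s-1)|A|$ edges and does not change the degree of any surviving $A$-vertex, so you reach a bipartite $G'$ with parts $A',B$, all $A'$-degrees at least $s$, and $e(G')\ge 4^{s+1}s!\,t|B|$. That step is correct and is essentially the same bookkeeping the paper performs in Corollary~\ref{cor-1}.

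The dichotomy that is supposed to finish the proof has a genuine hole: its two branches do not cover all cases, and the second branch is oriented the wrong way. Take $s\ge 2$, $t>s$, $|B|=n\ge s$, fix an $s$-set $S\subseteq B$, and let $A'$ consist of $4^{s+1}s!\,t\,n$ vertices each joined to exactly the vertices of $S$; then $e=s|A'|=(s-1)|A'|+4^{s+1}s!\,t\,n$, so the hypothesis holds with equality. Here the only available pairs are the $\binom{s}{2}$ pairs inside $S$, so no assignment can realise $2^{s+1}t|B|$ distinct edges of $\tilde G$ --- even though $|A'|$ is large relative to $|B|$, which is exactly the regime in which you claim the greedy realisation succeeds. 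Your second branch also fails on this instance, because every $s$-subset of $A'$ has common neighbourhood exactly $S$, of size $s<t$: the $K_{s,t}$ that is actually present has its $s$-side in $B$ (namely $S$) and its $t$-side in $A'$, the opposite orientation to the one your double count detects. Moreover, even after swapping the roles of the two sides in the counting step, the two regimes still do not meet: the convexity estimate $|B|\binom{4^{s+1}s!t}{s}>(t-1)\binom{|A'|}{s}$ forces $|A'|=O_{s,t}\bigl(|B|^{1/s}\bigr)$, whereas realising $2^{s+1}t|B|$ pairs by distinct representatives already requires $|A'|\ge 2^{s+1}t|B|$; the range in between is not ``slack in the constants'' but an uncovered case. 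What is missing is precisely the hard part of the lemma: an argument for the regime in which few distinct pairs are realisable because the neighbourhoods $N(a)$ are concentrated, where one must extract an $s$-subset of $B$ with at least $t$ common neighbours in $A'$ (or otherwise exploit the structure of the realised-pair graph). As written, the proof does not go through.
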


The following is a consequence of Lemma \ref{lem-2}.

\begin{cor}
	\label{cor-1}
	Assume $G$ is a bipartite graph with partite sets $A$ and $B$, and each vertex in $A$ has degree at least $s$. If $G$ has no $K_{s,t}$-minor, then $|E(G)| < 4^{s+1}s! st|B|$ and consequently, $B$ has a vertex $v$ of degree at most $4^{s+1}s! st$.
\end{cor}
\begin{proof}
	Assume $G$ has no $K_{s,t}$-minor. By Lemma \ref{lem-2}, $|E(G)| < (s-1)|A|+4^{s+1}s!t|B|$. 
	So $|E(G)| - (s-1)|A| < 4^{s+1}s!t|B|$. 
	
	As each vertex in $A$ has degree at least $s$, we have $|E(G)| \ge s|A|$, which implies that $|E(G)| - (s-1)|A| \ge |E(G)|- \frac{s-1}{s}|E(G)| = |E(G)|/s$. Therefore 
	$|E(G)|  \le s (|E(G)| - (s-1)|A|) < 4^{s+1}s! st|B|$.
\end{proof}

Let $V_1 = \{v \in V(G): d(v) < k\}$ and $V_2 =V(G)-V_1$. 
Let $p=|V_2|$.  Assume $(L,M)$ is a cover of $G$ such that $|L(v)| = d_G(v)$ for $v \in V_1$, and $|L(v)|=k$ for $v \in V_2$.
We shall show that $G$ is $(L,M)$-colourable.

		By Lemma \ref{lem-1b}, each subgraph $H$ of $G$ has minimum degree at most $2^{s+2}t-1$, that is, $G$ is $(2^{s+2}t-1)$-degenerate. 
  Hence, the vertices of $V_2$ can be ordered as 
  $w_1,w_2, \ldots, w_p$  so that each $w_i$ has at most $2^{s+2}t-1$ neighbours $w_j$ with $j < i$.

\begin{lem}
    \label{lem-sub}
    There exists $L'(w_i) \subseteq L(w_i)$ such that 
    \begin{itemize}
        \item $|L'(w_i)|=q$  for $i=1,2,\ldots, p$, and 
        \item $L'(w_i) \cap N_{(L,M)}(L'(w_j)) = \emptyset$ for each edge $w_iw_j$ of $G[V_2]$.
    \end{itemize}
\end{lem}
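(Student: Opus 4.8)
The plan is to construct the sublists greedily, processing the vertices of $V_2$ in the degeneracy order $w_1, w_2, \ldots, w_p$ established above. The arithmetic driving the whole argument is that $|L(w_i)| = k = 2^{s+2}tq$ while each $w_i$ has at most $2^{s+2}t-1$ earlier neighbours in the ordering; these two quantities are balanced precisely so that a set of size $q$ always survives the greedy step.

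Concretely, I would proceed by induction on $i$, assuming $L'(w_1), \ldots, L'(w_{i-1})$ have already been chosen, each of size $q$. For each earlier neighbour $w_j$ (with $j<i$ and $w_jw_i \in E(G)$), I consider the set of forbidden colours $L(w_i) \cap N_{(L,M)}(L'(w_j))$. The crucial observation is that, because $M_{w_iw_j}$ is a \emph{matching} between $L(w_i)$ and $L(w_j)$, each colour of $L'(w_j)$ is matched to at most one colour of $L(w_i)$; hence this forbidden set has size at most $|L'(w_j)| = q$, rather than the a priori bound of $k$. Summing over the at most $2^{s+2}t-1$ earlier neighbours, the number of colours of $L(w_i)$ that must be avoided is at most $(2^{s+2}t-1)q$. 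Since $|L(w_i)| = 2^{s+2}tq$, at least $k - (2^{s+2}t-1)q = q$ colours remain, and I select any $q$ of them to form $L'(w_i)$.

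It then remains to verify that this construction satisfies the stated condition for \emph{every} edge $w_aw_b$ of $G[V_2]$, not merely for the backward pairs treated during the greedy step. Here I would use that the condition is symmetric: since a colour $c \in L'(w_a)$ is matched to a colour $c' \in L'(w_b)$ through the single matching $M_{w_aw_b}$ exactly when $c'$ is matched to $c$, we have $L'(w_a) \cap N_{(L,M)}(L'(w_b)) = \emptyset$ if and only if $L'(w_b) \cap N_{(L,M)}(L'(w_a)) = \emptyset$. Thus, assuming $a<b$, the avoidance enforced when colouring $w_b$ (namely that $L'(w_b)$ meets no colour of $N_{(L,M)}(L'(w_a))$) automatically yields the required condition for the edge $w_aw_b$ in both forms.

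I do not expect a genuine obstacle beyond correctly accounting for the matching structure. The single substantive point is that a matching edge forbids at most $q$ colours per neighbour, which is exactly what reconciles the degeneracy bound $2^{s+2}t-1$ with the target sublist size $q$; the definition $k = 2^{s+2}tq$ is calibrated so the greedy selection never gets stuck and always leaves precisely $q$ admissible colours.
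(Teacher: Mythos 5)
Your proposal is correct and follows essentially the same greedy construction as the paper: process $V_2$ in the $(2^{s+2}t-1)$-degenerate order, delete from $L(w_i)$ the at most $(2^{s+2}t-1)q$ colours matched to earlier neighbours' sublists, and keep $q$ of the survivors; the symmetry of the matchings then gives the condition for every edge. Your explicit remark that each matching forbids at most $q$ colours per earlier neighbour is exactly the counting the paper uses.
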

\begin{proof}
    We choose subsets $L'(w_j) \subseteq L(w_j)$ for $j=1,2,\ldots, p$ recursively as follows:
    
     Let $L'(w_1)$ be any $q$-subset of $L(w_1)$. 

    Assume $i \ge 1$, and we have already chosen subsets $L'(w_j)$ for $j=1,2,\ldots, i$.

    Let $\{w_{j_1}, w_{j_2}, \ldots, w_{j_l}\}$ be the set of neighbours  of $w_{i+1}$ with $j_r < i+1$ for $r=1,2,\ldots, l$. Let 
    $L'(w_{i+1}) $ be any $q$-subset of $L(w_{i+1}) - N_{(L,M)} (\bigcup_{r=1}^l N_{(L,M)} (L' (w_{j_r}))$.

Since $l \le   2^{s+2}t-1$,  $|L(w_{i+1})|=2^{s+2}tq$, 
and $|\bigcup_{r=1}^l N_{(L,M)} (L' (w_{j_r}))| \le |\bigcup_{r=1}^l |L' (w_{j_r})| \le lq \le (2^{s+2}t-1)q$, the set $L'(w_{i+1}) $ is well defined. From the definition it follows that $|L'(w_i)|=q$ for $i=1,2,\ldots, p$ and $L'(w_i) \cap N_{(L,M)}(L'(w_j)) = \emptyset$ for any edge $w_iw_j$ of $G[V_2]$.
 \end{proof}

Let $L'(v)$ for $v \in V_2$ be chosen as in Lemma \ref{lem-sub} and $L'(v)=L(v)$ for $v \in V_1$. Let $M'$ be the restriction of
$M$ to $\bigcup_{v \in V(G)}L'(v)$.

We shall prove that $G$ is $(L',M')$-colourable.
Since for any edge $w_iw_j$ of $G[V_2]$, $M'_{w_iw_j} = \emptyset$, it is equivalent to proving that $G'=G-E(G[V_2])$ is $(L',M')$-colourable. In other words, we can treat $V_2$ as an independent set in the colouring process. 

Let $G_p$ be the bipartite graph obtained from $G'$ by contracting  each connected component $Q$ of $G'[V_1]$ into a vertex $v_Q$.  
Let $A_p$ be the partite set of $G_p$ consisting of vertices $v_Q$ for the connected components $Q$ of $G'[V_1]$, and let $B_p=V_2$.

Since $G$ is $s$-connected, each vertex $v \in A_p$ has degree $d_{G_p}(v) \ge s$. 

As $G_p$ has no $K_{s,t}$-minor, by Corollary \ref{cor-1},  there is a vertex $u_p \in B_p$ with $d_{G_p}(u_p) \le 4^{s+1}s!st$. Let $R_p = N_{G_p}(u_p)$.

Let $G_{p-1} = G_p- (R_p \cup \{u_p\})$. Again, each vertex in $A_{p-1}=A_p-R_p$ has $d_{G_{p-1}}(v) \ge s$, and $G_{p-1}$ is $K_{s,t}$-minor free. Therefore, $B_{p-1}=B_p-\{u_p\} $ has a vertex $u_{p-1}$ with $d_{G_{p-1}}(u_{p-1}) \le 4^{s+1}s!st$. Let $R_{p-1} = N_{G_{p-1}}(u_{p-1})$. 

Repeat this process, we obtain an ordering $u_1,u_2, \ldots, u_p$ of vertices of $V_2$, and a sequence   $R_1,R_2,\ldots, R_p$ of subsets of $A_p$ such that  
\begin{itemize}
	\item for each $i=1, 2,3,\ldots, p$,   $|R_i| \le 4^{s+1}s!st$,
 \item $A_p$ is the disjoint union of $R_1,R_2,\ldots, R_p$,
	\item for any $v_Q \in R_i$, $u_i$ is the last vertex in $V_2$ that has a neighbour in $Q$.
	\end{itemize}

Now we colour the vertices of $G$ one by one. Assume we have constructed a partial $(L',M')$-colouring $\phi$ of $G[X]$. The rules for choosing the next vertex to be coloured are  (R1) and (R2) described in the proof of Theorem \ref{thm-planarDP}.  (The notion $X_v, Y_v, \phi_v, \psi_v$ and the concept of safe components of $G[V_1]$ are defined the same way as in the proof of Theorem \ref{thm-planarDP}).

When (R1) is applied, the colour for $v$ is an arbitrary colour in $L^{\phi}(v)$. When (R2) is applied, the colour for $v$ is chosen as follows: 

Assume the next vertex to be coloured is $v=u_i \in V_2$. 

For each vertex $v_Q \in R_i$, if $Q$ is not safe with respect to $\phi_{u_i}$, then $Q-X_{u_i}$ is a GDP-tree. 
 Each block of $Q-X_{u_i}$ is a complete graph or a cycle. By (R1),  each vertex $v$ of $Q-X_{u_i}$ is adjacent to $u_i$ or is a cut-vertex of $Q-X_{u_i}$. Thus, $Q-X_{u_i}$ has a leaf block $B$ which has a non-root vertex $v$ which is adjacent to $u_i$. 
  If $B$ is a cycle, then $d_{G-X_{u_i}}(v) = d_{Q-X_{u_i}}(v)+1 = 3$. If $B$ is a complete graph, then 
  since $G$ is $K_{s,t}$-minor free, $|V(B)| \le s+t-1$, and hence  $d_{G-X_{u_i}}(v) = d_{Q-X_{u_i}}(v)+1 \le s+t-1$. 

  Since $$|L'(u_i)|=q>(s+t-1)\times 4^{s+1}s!st\ge (s+t-1)|R_i|,$$
  we can find a colour   $c \in L'(u_i)$
  so that by colouring $u_i$ with colour $c$, for each vertex $v_Q \in R_i$, $Q$ is safe with respect to $\psi_{u_i}$.
 
After all vertices of $V_2$ are coloured, all connected components of $G[V_1]$ are safe. Hence the partial $(L',M')$-colouring can be extended to an $(L',M')$-colouring of $G$.

  For any surface $\Sigma$, it is well known and easy to prove that there is a constant $t$ such that $K_{3,t}$ is not embeddable in $\Sigma$. Hence, there is a constant $k$ such that every 3-connected non-complete graph embedded in $\Sigma$ is degree-truncated DP-$k$-colourable.

This completes the proof of Theorem \ref{thm-minorDP}.


	\end{document}